\theoremstyle{definition}
\newtheorem{Def}{Definition}[section]
\newtheorem{Thm}[Def]{Theorem}
\newtheorem{Prop}[Def]{Proposition}
\newtheorem{Rem}[Def]{Remark}
\newtheorem{Lem}[Def]{Lemma}
\numberwithin{equation}{section}
\title{Residue of some Eisenstein series}
\author{Shoyu Nagaoka}
\date{}
\begin{document}

\maketitle

\begin{abstract}
The real analytic Eisenstein series is a special function
that has been studied classically. Its generalization to the case of many variables
has been studied extensively. Moreover, the analytic properties of
certain Eisenstein series on the Siegel modular groups
have also been investigated. The purpose of this study is to
provide concrete forms of the residue of
$E_0^{(m)}(z,s)$ at $s=m/2$.

\end{abstract}
\section{Introduction}
\label{intro}
The real analytic Eisenstein series is a special function
that has been studied classically. It is used in the representation theory of $SL(2,\mathbb{R})$,
and in analytic number theory (e.g., cf.\,\cite{Kub}).
Its generalization to the case of many variables was initiated by
Siegel and later studied more extensively by Langlands \cite{Lang} and Shimura \cite{S2}.

Let
$$
E_k^{(m)}(z,s)=\text{det}(y)^s
                         \sum_{\{ c,d\}}\text{det}(cz+d)^{-k}\,|\text{det}(cz+d)|^{-2s}
$$
be the Eisenstein series of degree $m$ (for a precise definition, see $\S$ \ref{sec3-1}).

Shimura \cite{S2} studied the analytic properties of the Eisenstein series, including
this type. He reveals the holomorphy of $E_k^{(m)}(z,s)$ in $s$ at $s=0$ by
analyzing the Fourier coefficients. The Fourier coefficient essentially consists of
two parts. One is the confluent hypergeometric function, and the other is the Siegl series.
Therefore, the analytic properties of Fourier coefficients, and the Eisenstein
series results in the study of the analytic properties of these two parts. In \cite{S1},
Shimura established the analytic theory of confluent hypergeometric functions
on tube domains and then applied them to analysis of Eisenstein series.
The results of holomorphy of $E_k^{(m)}(z,s)$ studied and extended by Weissauer
\cite{W}.

In Shimura's paper \cite{S2}, apart from the holomorphy, the residue of Eisenstein
series is mentioned. His statement is as follows:
\vspace{2mm}
\\
The residue of the Eisenstein series $E_{(m-1)/2}^{(m)}(z,s)$ at $s=1$ can be expressed 
as the product of $\pi^{-m}$ and a holomorphic modular form of weight $(m-1)/2$,
with rational Fourier coefficients.
\vspace{2mm}
\\
(It is known that the holomorphic modular form stated above is
a (rational) constant multiple of Eisenstein series $E_{(m-1)/2}^{(m)}(z,0)$.)
\vspace{2mm}
\\
Other than his work, few papers mention concrete forms of the residue for the Eisenstein series,
except for the classical work by Kaufhold \cite{Kau} (see $\S$ \ref{other}).

This study aims to provide concrete forms of residue $E_0^{(m)}(z,s)$
at $s=m/2$. Our results strongly depend on Mizumoto's work \cite{Mi}, especially his work on
the Fourier expansion of $E_k^{(m)}(z,s)$, which is a refinement of Maass's result.
\vspace{4mm}
\\
\textbf{Theorem} 
\begin{align*}
& \underset{s=m/2}{{\rm Res}}E_0^{(m)}(z,s)\\
&=\mathbb{A}^{(m)}(y)+\mathbb{B}^{(m)}(y)
                \sum_{h\in\Lambda_m^{(1)}}\sigma_0({\rm cont}(h))\eta_m(2y,\pi h;m/2,m/2)
                \boldsymbol{e}(\sigma(hx)),
\end{align*}
{\it where}
\begin{align*}
\mathbb{A}^{(m)}(y) &=\frac{1}{2}\,\alpha_m(y,m/2)\cdot C_{m-1}^{(m)}(y)
                                                    +\frac{1}{8}\,v(m-1)\text{det}(2y)^{-(m-1)/2}\alpha'_m(y,m/2)\\
                         &+\frac{1}{8}\,\beta'_m(y,m/2),\\
\mathbb{B}^{(m)}(y) &=2^{m-2}\pi^{m\kappa(m)}\text{det}(y)^{m/2}\Gamma_m(m/2)^{-1}\zeta(m)^{-1}\\
                          &\cdot \prod_{j=1}^{m-2}\zeta(m-j)\,\prod_{j=1}^{m-1}\zeta(2m-2j)^{-1}.
 \end{align*}     
\vspace{2mm}
\\
Here, $C_{m-1}^{(m)}$ is the constant term of the completed Koecher--Maass zeta function
$\xi_{m-1}^{(m)}(2y,s)$ at $s=m/2$ (see (\ref{DefC})), and $\alpha_m(y,s)$ and $\beta_m(y,s)$
are defined in (\ref{alpham}) and (\ref{betam}), respectively,  which are essentially products of the gamma functions
and zeta functions.
\vspace{2mm}
\\
In the degree 2 case, the constant $C_{1}^{(2)}(y)$ can be calculated explicitly from the first
Kronecker limit formula.
\vspace{2mm}
\\
\textbf{Corollary}
\\
\begin{align}
\underset{s=1}{{\rm Res}}\,E_0^{(2)}(z,s) 
& =\frac{18}{\pi^2\sqrt{\text{det}(y)}}\left(\frac{1}{2}\gamma+\frac{1}{2}\log\frac{v'}{4\pi}-\log|\eta(W_g)|^2 \right)
\nonumber \\
& \quad +\frac{36\,\text{det}(y)}{\pi^2}\sum_{h\in\Lambda_2^{(1)}}\sigma_0(\text{cont}(h))\eta_2(2y,\pi h;1,1)
             \boldsymbol{e}(\sigma(hx)). \label{A}
\end{align}
(for the notation, see $\S$ \ref{Degree2}.)
In \cite{Na}, the author provided a formula for $E_2^{(2)}(z,0)$ (Siegel Eisenstein series of degrees 2 and 2):
\begin{align}
E_2^{(2)}(z,0)=& 1-\frac{18}{\pi^2\sqrt{\text{det}(y)}}\left(1+\frac{1}{2}\gamma+\frac{1}{2}\log\frac{v'}{4\pi}-\log|\eta(W_g)|^2 \right) \nonumber \\
         &-\frac{72}{\pi^3}\sum_{\substack{0\ne h\in\Lambda_2\\ \text{discr}(h)=\square}}\varepsilon_h
           \sigma_0(\text{cont}(h))\eta_2(2y,\pi h;2,0)\boldsymbol{e}(\sigma(hx))\nonumber \\
         &+288\sum_{0\ne h\in \Lambda_2}\sum_{d\mid\text{cont}(h)}d\,H\left(\frac{|\text{discr}(h)|}{d^2}\right)
              \boldsymbol{e}(\sigma(hz)). \label{B}
\end{align}
It is interesting that the same term appears in each Fourier coefficient in (\ref{A}) and
(\ref{B}).

\section{Notation}
\label{notation}
$1^\circ$\quad
If $a$ is an $m\times m$ matrix, we write it as $a^{(m,n)}$, and as $a^{(m)}$ if $m=n$,
${}^ta$ denotes the transpose of $a$, and $a_{ij}$ denotes the $(i,j)$-entry of $a$. For a
matrix $a$, we write $\sigma (a)$ as the trace of $a$. If the right-hand side is defined as
the identity matrix (resp. zero matrix) of size $m$ and is denoted by $1_m$ (resp. $0_m$).
For a commutative ring $R$ with $1$, we denote $R^{(m,n)}$ by the $R$-module of
all $m\times n$ matrices with entries $R$. We set $R^{(m)}:=R^{(m,m)}$ and $R^m:=R^{(1,m)}$.
\vspace{2mm}
\\
$2^\circ$\quad
We put
\vspace{1mm}
\\
$\bullet$\quad $\displaystyle\kappa (\nu)=\frac{\nu+1}{2}$ for $\nu\in\mathbb{Z}_{\geqq 0}$.
\vspace{2mm}
\\
$\bullet$\quad $\boldsymbol{e}(z)=\text{exp}(2\pi iz)$ for $z\in\mathbb{C}$.
\vspace{2mm}
\\
$\bullet$\quad $H_m:=\{\,z\in\mathbb{C}^{(m)}\,\mid\, {}^tz=z,\;\text{Im}(z)>0\,\}$\,:\;upper
half space.
\vspace{2mm}
\\
$\bullet$\quad $V_m=\{\,x\in \mathbb{R}^{(m)}\,\mid\, {}^tx=x\,\}$.
\vspace{2mm}
\\
$\bullet$\quad $V_m(\mathbb{C})=V_m\otimes_{\mathbb{R}}\mathbb{C}$.
\vspace{2mm}
\\
$\bullet$\quad $P_m:=\{\, x\in V_m\,\mid\,x>0\,\}$.
\vspace{2mm}
\\
$\bullet$\quad $V_m(p,q,r)$: subset of $V_m$ consisting of the elements with $p$ positive,\\
             \qquad\qquad\qquad\quad $q$ negative, $r$ zero eigenvalues.
\vspace{2mm}
\\
$3^\circ$\quad
The function $\Gamma_m(s)$ is defined by
$$
\Gamma_m(s)=\pi^{\frac{m(m-1)}{4}}\prod_{\nu=0}^{m-1}\Gamma\left(s-\frac{\nu}{2}\right)
$$
for $m>0$, and $\Gamma_0(s):=1$. 
\vspace{2mm}
\\
$4^\circ$\quad The set of symmetric half-integral matrices of size $m$ is denoted by
$\Lambda_m$. We place
$$
\Lambda_m^{(\nu)}:=\{\,h\in\Lambda_m\,\mid\,\text{rank}(h)=\nu\,\}.
$$
For $\nu\in\mathbb{Z}$ with $1\leqq \nu\leqq m$,
$$
\mathbb{Z}_{\text{prim}}^{(m,\nu)}=\{\, a\in \mathbb{Z}^{(m,\nu)}\,\mid\,
                                             a\; {\rm is\; primitive} \,\}.
$$
$5^\circ$\quad Throughout the paper, we understand that the product (resp. sum)
over an empty set is equal to $1$ (resp. $0$).
\section{Preliminary}
\label{Pre}
\subsection{Eisenstein series}
\label{sec3-1}
For $m\in\mathbb{Z}_{>0}$ and $k\in 2\mathbb{Z}_{\geq 0}$, let
\begin{equation}
\label{DefEis}
E_k^{(m)}(z,s)=\text{det}(y)^s
                         \sum_{\{ c,d\}}\text{det}(cz+d)^{-k}\,|\text{det}(cz+d)|^{-2s}
\end{equation}
be the Eisenstein series for $\Gamma_m=Sp_m(\mathbb{Z})$ (Siegel modular
groups of degrees $m$). Here, $z=x+iy$ is a variable on $H_m$, $s$ is a complex
variable, and $\{ c,d\}$ runs over a complete system of representatives $\binom{\,*\,\;*\,}{c\;d}$
of $\left\{ \binom{\,*\,*\,}{\,0\;*\,}\in \Gamma_m \right\}\backslash \Gamma_m$.
The right-hand side of (\ref{DefEis}) converges absolutely, locally, and uniformly on the
$$
\{\; (z,s)\in H_m\times\mathbb{C}\;\mid\; \text{Re}(s)>(m+1-k)/2\;\}.
$$
As is well known, the Eisenstein series $E_k^{(m)}(Z,s)$ has a meromorphic continuation
to the whole $s$-plane (Langlands \cite{Lang}, Mizumoto \cite{Mi}).
\subsection{Confluent hypergeometric functions}
\label{sec3-2}
Shimura studied the confluent hypergeometric functions on the tube domains (\cite{S1})
and applied his results to develop the theory of the Eisenstein series (\cite{S2}). In this section,
we summarize some results on the confluent hypergeometric functions that will be used later.

For $g\in P_m$, $h\in V_m$, and $(\alpha,\beta)\in\mathbb{C}^2$,
\begin{equation}
\label{xi}
\xi_m(g,h;\alpha,\beta)=\int_{V_m}\boldsymbol{e}^{-\sigma(hx)}\text{det}(x+ig)^{-\alpha}
                                     \text{det}(x-ig)^{-\beta}dx,
\end{equation}
with $dx=\prod_{i\leqq j}dx_{ij}$, which is convergent for $\text{Re}(\alpha+\beta)>m$;
\begin{equation}
\label{eta}
\eta_m(g,h;\alpha,\beta)= \int_{\substack{V_m\\ x\pm h>0}}
e^{-\sigma(gx)}\text{det}(x+h)^{\alpha-\kappa(m)}
                                     \text{det}(x-h)^{\beta-\kappa(m)}dx,\\
\end{equation}
which is convergent for $\text{Re}(\alpha)>\kappa (m)-1$, $\text{Re}(\beta)>m$.
We also use
\begin{equation*}
\label{etastar}
\eta^*_m(g,h;\alpha,\beta)=\text{det}(g)^{\alpha+\beta-\kappa(m)}\eta_m(g,h;\alpha,\beta),
\end{equation*}
which satisfies the property
\begin{equation*}
\eta_m^*(g[a],h[{}^ta^{-1}];\alpha,\beta)
=\eta^*_m(g,h;\alpha,\beta)
\quad
\text{for all}
\quad a\in GL_m(\mathbb{R}).
\end{equation*}
By \cite{S1}, (1.29),
\begin{equation}
\label{xieta}
\xi_m(g,h;\alpha,\beta)= i^{m(\beta-\alpha)}\cdot 2^m\pi^{m\kappa(m)}
                                          \Gamma_n(\alpha)^{-1}\Gamma_n(\beta)^{-1}
                                        \eta_m(2g,\pi h;\alpha,\beta).
\end{equation}
for $\text{Re}(\alpha)>\kappa(m)-1$, $\text{Re}(\beta)>m$.
When $h=0_m$, the following identity holds:
\begin{Prop}
(Shimura \cite{S1}, (1.31)) {\it If ${\rm Re}(\alpha+\beta)>2\kappa(m)-1$, then}
\begin{align}
\xi_m(g,0_m;\alpha,\beta) &= i^{m\beta-m\alpha}\cdot 2^{m(1-\kappa(m))}(2\pi)^{m\kappa(m)}\nonumber \\
                     & \cdot \Gamma_m(\alpha)^{-1}\Gamma_m(\beta)^{-1}\Gamma_m(\alpha+\beta-\kappa(m)) 
                     \nonumber \\
                     & \cdot \text{det}(2g)^{\kappa(m)-\alpha-\beta}.
                     \label{xi0}
\end{align}
\end{Prop}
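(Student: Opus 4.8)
The plan is to reduce the vanishing-$h$ case of the confluent hypergeometric integral to a classical beta-type integral over the cone $P_m$. First I would start from the defining integral
\[
\xi_m(g,0_m;\alpha,\beta)=\int_{V_m}\det(x+ig)^{-\alpha}\det(x-ig)^{-\beta}\,dx,
\]
which converges for $\mathrm{Re}(\alpha+\beta)>2\kappa(m)-1=m$ when $h=0_m$ (note the stronger convergence range here, since no oscillating factor $\boldsymbol{e}^{-\sigma(hx)}$ is present to help). Using the $GL_m(\mathbb{R})$-equivariance coming from the substitution $x\mapsto x[a]$ together with the homogeneity of $\det$, it suffices to treat $g=1_m$ and then restore the $\det(2g)^{\kappa(m)-\alpha-\beta}$ factor at the end; this already explains the shape of the last line of \eqref{xi0}.

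Next I would diagonalize: write $\det(x\pm i1_m)=\prod_j(\lambda_j\pm i)$ where $\lambda_j$ are the eigenvalues of $x$, and pass to polar-type coordinates on $V_m$ (eigenvalues plus the $O(m)$-direction), so that $dx$ picks up the Vandermonde-squared Jacobian $\prod_{i<j}|\lambda_i-\lambda_j|$ times the volume of the orthogonal group. The integral then factors through the Laplace–Mellin machinery: one recognizes $\det(x+i1_m)^{-\alpha}\det(x-i1_m)^{-\beta}$ as (up to the standard $i^{m\beta-m\alpha}$ phase and powers of $2$) a Fourier transform of a product of one-variable functions of the form $t^{\alpha-1}e^{-t}$, so that the multidimensional integral collapses to $\Gamma_m(\alpha)^{-1}\Gamma_m(\beta)^{-1}\Gamma_m(\alpha+\beta-\kappa(m))$ by the cone-integral identity
\[
\int_{P_m}e^{-\sigma(t)}\det(t)^{s-\kappa(m)}\,dt=\Gamma_m(s).
\]
Concretely I would write each one-dimensional factor $(\lambda+i)^{-\alpha}$ as a gamma-integral $\Gamma(\alpha)^{-1}\int_0^\infty u^{\alpha-1}e^{-u(\lambda+i)}\,du$ (after rotating the contour, valid in the stated half-plane), similarly for $(\lambda-i)^{-\beta}$, interchange orders of integration, perform the Gaussian/oscillatory $\lambda$-integral, and then recombine the remaining $u,v$-integrals into a single matrix cone integral whose value is $\Gamma_m(\alpha+\beta-\kappa(m))$; the normalizing constants and phases assemble into $i^{m\beta-m\alpha}2^{m(1-\kappa(m))}(2\pi)^{m\kappa(m)}$.

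The main obstacle is bookkeeping rather than conceptual: tracking the powers of $2$ and $\pi$ through the change of variables (the $\kappa(m)=(m+1)/2$ shifts arise precisely from the Vandermonde Jacobian and the normalization of $dx=\prod_{i\le j}dx_{ij}$), and justifying the contour rotations and the interchange of the (non-absolutely-convergent) oscillatory $\lambda$-integral with the $u,v$-integrals — this requires working first in the region $\mathrm{Re}(\alpha),\mathrm{Re}(\beta)$ large, where everything converges absolutely, and then invoking analytic continuation in $(\alpha,\beta)$ to extend the closed-form identity to the full range $\mathrm{Re}(\alpha+\beta)>2\kappa(m)-1$. Since this is exactly the computation carried out in Shimura \cite{S1}, I would in practice simply cite (1.31) of \cite{S1}; the sketch above is the route one takes to verify it.
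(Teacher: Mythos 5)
First, note that the paper itself gives no proof of this Proposition: it is quoted verbatim from Shimura \cite{S1}, (1.31), so there is no internal argument to compare yours against. Measured on its own terms, your sketch has the right global shape (a gamma-integral representation of $\det(\cdot)^{-\alpha}$, Fourier inversion over $V_m$, and the cone integral $\int_{P_m}e^{-\sigma(t)}\det(t)^{s-\kappa(m)}dt=\Gamma_m(s)$), but the step where you pass to eigenvalue coordinates breaks the argument. Once you write $dx=c_m\prod_{i<j}|\lambda_i-\lambda_j|\,d\lambda\,dk$, the integrand $\prod_j(\lambda_j+i)^{-\alpha}(\lambda_j-i)^{-\beta}$ is multiplied by the Vandermonde factor, which couples the eigenvalues; the $m$ one-dimensional gamma representations of $(\lambda_j\pm i)^{-\alpha}$ can then no longer be integrated out independently in the $\lambda_j$, and there is no canonical way to ``recombine the remaining $u,v$-integrals into a single matrix cone integral.'' That recombination is exactly where the content of the formula lies --- it is why $\Gamma_m(\alpha+\beta-\kappa(m))$ appears rather than a product of one-variable gamma factors --- so as written the sketch assumes the very step it needs to prove.

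The fix is to never diagonalize: use the matrix-variable gamma integral $\Gamma_m(\alpha)\det(-iz)^{-\alpha}=\int_{P_m}e^{i\sigma(uz)}\det(u)^{\alpha-\kappa(m)}du$ for $\mathrm{Im}(z)>0$, apply it to $z=x+ig$ and its conjugate to $x-ig$, integrate over $x\in V_m$ first (Fourier inversion on $V_m\cong\mathbb{R}^{m\kappa(m)}$ with the pairing $\sigma(ux)=\sum_iu_{ii}x_{ii}+2\sum_{i<j}u_{ij}x_{ij}$, which is the source of $(2\pi)^{m\kappa(m)}$ and the powers of $2$), obtain the constraint $u=v$, and evaluate $\int_{P_m}e^{-2\sigma(gu)}\det(u)^{\alpha+\beta-2\kappa(m)}du=\Gamma_m(\alpha+\beta-\kappa(m))\det(2g)^{\kappa(m)-\alpha-\beta}$; the hypothesis $\mathrm{Re}(\alpha+\beta)>2\kappa(m)-1$ is precisely the convergence condition for this last integral. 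In fact, within this paper the shortest correct derivation is to set $h=0_m$ in (\ref{xieta}) and evaluate $\eta_m(2g,0_m;\alpha,\beta)$ directly from its definition (\ref{eta}): it reduces to the same cone integral, and the identity $2^m\pi^{m\kappa(m)}=2^{m(1-\kappa(m))}(2\pi)^{m\kappa(m)}$ yields the stated constant.
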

For $g\in P_m$, $h\in V_m(p,q,r)$ with $p+q+r=m$, we put
\begin{align*}
& \delta_+(hg):=\text{the product of all positive eigenvalues of} \;\;g^{\frac{1}{2}}hg^{\frac{1}{2}},\\
& \delta_{-}(hg):=\delta_+((-h)g).
\end{align*}
We then put
\begin{align}
\omega_m(g,h;\alpha,\beta) :=&  2^{-p\alpha-q\beta}\Gamma_p\left(\beta-(m-p)/2\right)^{-1}
                                                               \Gamma_q\left(\alpha-(m-q)/2\right)^{-1}\nonumber \\
                                   & \cdot \Gamma_r\left(\alpha+\beta-\kappa(m) \right)^{-1} \nonumber\\
                                   & \cdot \delta_{+}(hg)^{\kappa(m)-\alpha-q/4}
                                     \delta{-}(hg)^{\kappa(m)-\beta-p/4}\,
                                     \eta_n^*(g,h;\alpha,\beta), \label{omegaeta}
\end{align}
One of the main results in \cite{S1} is as follows:
\begin{Thm}
\label{ShimuraMain}
(Shimura \cite{S1}, Theorem 4.2)\;{\it 
Function $\omega_m$ can be continued as a holomorphic function in $(\alpha,\beta)$ to
the whole $\mathbb{C}^2$ and satisfies
$$
\omega_m(g,h;\alpha,\beta)=\omega_m\left(g,h;\kappa(m)+(r/2)-\beta,\kappa(m)+(r/2)-\alpha\right).
$$
}
\end{Thm}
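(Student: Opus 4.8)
The plan is to follow Shimura's route of reducing the integral to normal form and then inducting on the signature of $h$. First I would exploit the $GL_m(\mathbb{R})$-equivariance of $\eta_m^*$ recorded above: since $\delta_\pm(hg)$, $\det(g)$ and the $\Gamma$-factors in (\ref{omegaeta}) transform under $(g,h)\mapsto(g[a],h[{}^ta^{-1}])$ in a way compensating the equivariance of $\eta_m^*$, the function $\omega_m(g,h;\alpha,\beta)$ depends on $(g,h)$ only through the $GL_m(\mathbb{R})$-orbit of the pair. By Sylvester's law of inertia this orbit is determined by the signature $(p,q,r)$, so it suffices to prove the statement for $g$ equal to a convenient positive definite matrix (say $1_m$) and $h=\mathrm{diag}(1_p,-1_q,0_r)$.

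Second, I would settle the definite case $h>0$, i.e. $q=r=0$ and $p=m$. Here $\{x:x\pm h>0\}$ collapses to $\{x>h\}$, and the substitution $x=h+y$ turns (\ref{eta}) into
$$
e^{-\sigma(gh)}\int_{y>0}e^{-\sigma(gy)}\det(2h+y)^{\alpha-\kappa(m)}\det(y)^{\beta-\kappa(m)}\,dy,
$$
a matrix-argument confluent hypergeometric (Whittaker) integral. Its meromorphic continuation in $(\alpha,\beta)$, with poles only at the zeros of the single factor $\Gamma_m(\beta)^{-1}$ (equivalently $\Gamma_p(\beta-(m-p)/2)^{-1}$ in (\ref{omegaeta})), follows from the classical analytic theory of Siegel's $\Gamma_m$-integral --- e.g. by expanding $\det(2h+y)^{\alpha-\kappa(m)}$ into a zonal-polynomial series in $y$ and integrating term by term, or from the holonomic differential system the integral satisfies. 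The functional equation $(\alpha,\beta)\mapsto(\kappa(m)-\beta,\kappa(m)-\alpha)$ (the case $r=0$) then comes either from the Kummer symmetry of the matrix ${}_1F_1$, or, following Shimura, from a Fourier-transform computation combining (\ref{xi}), (\ref{xieta}) and the $h=0$ evaluation (\ref{xi0}).

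Third, I would reach an arbitrary signature by induction, first decreasing $q$ and then $r$. Writing $x\in V_m$ in blocks according to $m=p+q+r$ and performing a Schur-complement change of variables, the dependence of $\det(x\pm h)$ on the off-diagonal blocks becomes trivial, so the Gaussian factor $e^{-\sigma(gx)}$ lets one integrate those blocks out; this expresses $\eta_m$ for a rank-$m$ signature matrix in terms of $\eta$-integrals of smaller size --- handled by the induction --- times explicit $\Gamma$- and determinant factors, which are exactly the extra $\delta_\pm(hg)$-powers and $\Gamma$'s in (\ref{omegaeta}). The $r$ zero eigenvalues give the delicate directions: there the inner integral over the $r\times r$ block is a pure Siegel $\Gamma$-integral, whose only singularities are simple poles cancelled precisely by the factor $\Gamma_r(\alpha+\beta-\kappa(m))^{-1}$. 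Reassembling the pieces and matching exponents, one sees that the normalization in (\ref{omegaeta}) is rigged so that $p\leftrightarrow q$, $\alpha\leftrightarrow\beta$ and $\delta_+\leftrightarrow\delta_-$ under $(\alpha,\beta)\mapsto(\kappa(m)+r/2-\beta,\kappa(m)+r/2-\alpha)$, which delivers both the holomorphy on all of $\mathbb{C}^2$ and the stated functional equation.

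The main obstacle will be the bookkeeping in the third step: following how the factors $\Gamma_p$, $\Gamma_q$, $\Gamma_r$ and the fractional powers $\delta_+(hg)^{\kappa(m)-\alpha-q/4}$, $\delta_-(hg)^{\kappa(m)-\beta-p/4}$ propagate through the block integrations, and checking uniformity in $(\alpha,\beta)$ so that the interchanges of integration and analytic continuation are legitimate. The $r>0$ case is the crux, since there (\ref{eta}) is only controlled under the restriction $\mathrm{Re}(\beta)>m$ and one must show the growth along the $r$ flat directions is exactly absorbed by $\Gamma_r(\alpha+\beta-\kappa(m))^{-1}$; once that cancellation is established, the functional equation is essentially forced by the symmetry of the building blocks assembled in the first three steps.
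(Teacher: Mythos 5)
This theorem is imported verbatim from Shimura \cite{S1}, Theorem 4.2; the paper states it without proof and uses it only as a black box (in the proof of Proposition \ref{Triang}), so there is no proof in the paper to compare your argument against. Judged on its own terms, your outline does track the architecture of Shimura's actual argument: orbit reduction, the definite case via a Laplace-transform/$\Gamma_m$ integral, and a block decomposition for mixed signature in which the factor $\Gamma_r(\alpha+\beta-\kappa(m))^{-1}$ in (\ref{omegaeta}) absorbs the contribution of the degenerate directions.

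Two caveats. First, your normal-form reduction overshoots: invariance of $\omega_m$ under $(g,h)\mapsto(g[a],h[{}^ta^{-1}])$ lets you take $g=1_m$ and $h$ diagonal, but the stabilizer of $1_m$ is only $O(m)$, so the nonzero eigenvalues of $g^{1/2}hg^{1/2}$ are invariants of the orbit and cannot be normalized to $\pm1$ (note that $\eta_m(g,th;\alpha,\beta)$ rescales $g$ rather than being invariant). This is harmless for the rest of the strategy, which never really uses $h=\mathrm{diag}(1_p,-1_q,0_r)$, but it should be stated correctly. Second, the two genuinely hard points --- that the continuation of (\ref{eta}) beyond $\mathrm{Re}(\beta)>m$ in the presence of zero eigenvalues of $h$ is holomorphic after division by the indicated gamma factors, and that the functional equation is not ``essentially forced by symmetry'' but must be proved as an identity in the region of convergence (ultimately resting on the rank-one Whittaker symmetry, or equivalently on (\ref{xieta}) together with the evaluation (\ref{xi0})) and then propagated by analytic continuation --- are precisely the points your sketch defers to ``bookkeeping.'' As a self-contained proof the proposal is therefore incomplete; as a reconstruction of how \cite{S1} proceeds it is essentially faithful.
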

\subsection{Fourier expansion}
\label{FourierEx}
For $m\in\mathbb{Z}_{>0}$ and $k\in 2\mathbb{Z}_{\geqq 0}$, let $s$ be a complex variable,
where $\text{Re}(s)>\kappa (m)$, and let $z=x+iy$ be a variable on $H_m$ with $x\in V_m$.
and $y\in P_m$. Maass (\cite{Ma}) provided a formula for the Fourier expansion of the
Eisenstein series $E_k^{(m)}(z,s)$:
\begin{align}
E_k^{(m)}(z,s) &= \text{det}(y)^s+\text{det}(y)^s\sum_{\nu=1}^m\sum_{h\in\Lambda_\nu}
                                         \sum_{q\in\mathbb{Z}_{\text{prim}}^{(m,\nu)}/GL_\nu(\mathbb{Z})}\nonumber\\
                 & S_\nu(h,2s+k)\xi_{\nu}(y[q],h;s+k,s)\boldsymbol{e}(\sigma(h[{}^tq]x)),  \label{ExMaass}                 
\end{align}
where
\begin{equation}
S_\nu(h,s)=\sum_{r\in V_\nu\cap\, \mathbb{Q}^\nu \text{mod}\, 1}n(r)^{-s}\boldsymbol{e}(\sigma (hr))
\end{equation}
is the singular series (Siegel series), where $n(r)$ is the product of the reduced positive denominators
of the elementary divisors of $r$, and $\xi_\nu$ is the confluent hypergeometric function defined in  (\ref{xi}).

From \cite{Mi}, Lemma 1.1, we have
\begin{Lem}
{\it For $\nu\in\mathbb{Z}_{>0}$, each $h\in\Lambda_\nu$ of {\rm rank} $\lambda>0$ (that is,
$h\in \Lambda_\nu^{(\lambda)}$) is expressed uniquely as
$$
h=h_0[{}^tw]
$$
with $h_0\in\Lambda_\lambda^{(\lambda)}$ and 
$w\in \mathbb{Z}_{{\rm prim}}^{(\nu,\lambda)}/GL_\lambda(\mathbb{Z})$.}
\end{Lem}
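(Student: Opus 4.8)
I would prove this by reducing the assertion to elementary $\mathbb{Z}$-lattice theory — Smith normal form together with the correspondence between subspaces of $\mathbb{Q}^\nu$ and primitive sublattices of $\mathbb{Z}^\nu$ — using in an essential way that $h={}^th$ makes the column space and the row space of $h$ coincide. \emph{Existence.} Given $h\in\Lambda_\nu^{(\lambda)}$, let $U\subseteq\mathbb{Q}^\nu$ be the column space of $h$, a subspace of dimension $\lambda$, which by symmetry is also the row space. Put $L:=U\cap\mathbb{Z}^\nu$, the unique primitive sublattice of $\mathbb{Z}^\nu$ with $L\otimes\mathbb{Q}=U$, and let $w\in\mathbb{Z}^{(\nu,\lambda)}$ have as its columns a $\mathbb{Z}$-basis of $L$; then $w\in\mathbb{Z}_{\mathrm{prim}}^{(\nu,\lambda)}$, and by the elementary divisor theorem $w$ has an integral left inverse $u\in\mathbb{Z}^{(\lambda,\nu)}$ with $uw=1_\lambda$. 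I then set $h_0:=u\,h\,{}^tu$, which is symmetric.

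\emph{Verification that $h=h_0[{}^tw]$ with $h_0\in\Lambda_\lambda^{(\lambda)}$.} The matrix $wu$ is idempotent, $(wu)^2=w(uw)u=wu$, with image $U$; since every column of $h$ lies in $U$ this gives $(wu)h=h$, and transposing together with $h={}^th$ gives $h\,{}^t(wu)=h$. Hence
\[
h_0[{}^tw]=w\,h_0\,{}^tw=(wu)\,h\,{}^t(wu)=h .
\]
For half-integrality of $h_0$ I would invoke the characterization that $g\in\Lambda_\mu$ if and only if $g[y]\in\mathbb{Z}$ for every $y\in\mathbb{Z}^\mu$: since $h_0[y]=h[{}^tu\,y]$ and ${}^tu\,y\in\mathbb{Z}^\nu$, one gets $h_0[y]\in\mathbb{Z}$, so $h_0\in\Lambda_\lambda$; and $h_0$ has rank $\lambda$, being $\lambda\times\lambda$ with $\mathrm{rank}(h_0)\geq\mathrm{rank}(w\,h_0\,{}^tw)=\mathrm{rank}(h)=\lambda$. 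Thus $h_0\in\Lambda_\lambda^{(\lambda)}$.

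\emph{Uniqueness.} If also $h=h_0'[{}^tw']$ with $w'\in\mathbb{Z}_{\mathrm{prim}}^{(\nu,\lambda)}$ and $h_0'\in\Lambda_\lambda^{(\lambda)}$, then, $h_0'$ being invertible, the column space of $w'$ equals that of $h$, namely $U$. Two primitive matrices in $\mathbb{Z}^{(\nu,\lambda)}$ whose columns span $U$ over $\mathbb{Q}$ have the same image, namely $U\cap\mathbb{Z}^\nu$, so they differ by right multiplication by some $a\in GL_\lambda(\mathbb{Z})$; this pins down $w'$ modulo $GL_\lambda(\mathbb{Z})$. Given such an $a$ with $w'=wa$, multiplying $w'h_0'\,{}^tw'=w\,h_0\,{}^tw$ on the left by a left inverse of $w'$ and on the right by its transpose recovers $h_0'=h_0[{}^ta^{-1}]$; this is the precise sense of "uniqueness", the pair $(h_0,w)$ being unique up to the compatible action $(h_0,w)\mapsto(h_0[{}^ta^{-1}],\,wa)$, $a\in GL_\lambda(\mathbb{Z})$.

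\emph{Main obstacle.} The argument is essentially bookkeeping once primitive sublattices are brought in; the only genuinely delicate points are (i) checking that $h_0$ lies in $\Lambda_\lambda$ and not merely in $\tfrac{1}{2}\mathbb{Z}^{(\lambda,\lambda)}$, which the identity $h_0[y]=h[{}^tu\,y]$ settles cleanly, and (ii) phrasing the uniqueness correctly, since the statement records $h_0$ as an honest matrix but $w$ only modulo $GL_\lambda(\mathbb{Z})$, so one must keep the two $GL_\lambda(\mathbb{Z})$-actions matched.
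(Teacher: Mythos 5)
Your argument is correct. Note that the paper itself offers no proof of this lemma: it is quoted verbatim from Mizumoto (\cite{Mi}, Lemma 1.1), so there is nothing in the text to compare against line by line. Your route --- take $U$ the common row/column space of the symmetric matrix $h$, let $w$ be a basis of the primitive sublattice $U\cap\mathbb{Z}^\nu$, produce an integral left inverse $u$ from the completability of $w$ to $GL_\nu(\mathbb{Z})$, and set $h_0=u\,h\,{}^tu$ --- is the standard lattice-theoretic proof, and the two delicate points you flag are handled correctly: the identity $h_0[y]=h[{}^tu\,y]$ does give half-integrality via the characterization $g\in\Lambda_\mu\iff g[y]\in\mathbb{Z}$ for all integral $y$, and your formulation of uniqueness (the pair $(h_0,w)$ unique up to $(h_0,w)\mapsto(h_0[{}^ta^{-1}],wa)$ for $a\in GL_\lambda(\mathbb{Z})$) is exactly the sense in which the lemma is used later in the paper, where $r$ runs over a fixed set of representatives of $\mathbb{Z}_{\rm prim}^{(m,\lambda)}/GL_\lambda(\mathbb{Z})$ and $h_0$ is then pinned down by the left inverse of $r$.
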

Mizumoto provided a reduced formula for $\xi_\nu$ (\cite{Mi}, Lemma 1.4):
\begin{Prop}
\label{xi-eta}
{\it Let $h=h_0[{}^tw]$ be, as in the above lemma. Suppose that ${\rm Re}(s)>\nu$.
Then, in {\rm (\ref{ExMaass})}, we have}
\begin{align}
& \xi_{\nu}(y[q],h;s+k,s) \nonumber \\
& = (-1)^{k\nu/2}2^\nu\pi^{\nu\kappa(\nu)+\lambda(\nu-\lambda)/2)}
                                \cdot \Gamma_{\nu-\lambda}(2s+k-\kappa(\nu))
                                \Gamma_\nu(s)^{-1}\Gamma_\nu(s+k)^{-1} \nonumber \\
                &\cdot {\rm det}(2y[q])^{\kappa(\nu)-k-2s}
                \eta_{\lambda}^*(2y[qw],\pi h_0;s+k+(\lambda-\nu)/2,s+(\lambda-\nu)/2).
\end{align}
\end{Prop}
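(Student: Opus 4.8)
\medskip\noindent\emph{Sketch of proof.}
The idea is to pass from $\xi_\nu$ to $\eta_\nu$ via (\ref{xieta}), to bring the rank-$\lambda$ matrix $h$ into the block-diagonal shape $\mathrm{diag}(h_0,0_{\nu-\lambda})$ using the $GL_\nu$-covariance of $\eta^*_\nu$, and then to carry out the remaining $(\nu-\lambda)$-dimensional integration explicitly by a Gaussian integral together with a Siegel matrix gamma integral. Throughout I assume $\mathrm{Re}(s)>\nu$, which makes every integral below absolutely convergent; since $\eta^*_\lambda$ is entire by Theorem \ref{ShimuraMain}, the asserted identity then follows on all of $\mathrm{Re}(s)>\nu$ by analytic continuation. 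By (\ref{xieta}) with $m=\nu$ and $i^{-k\nu}=(-1)^{k\nu/2}$ (recall $k\in2\mathbb{Z}_{\geqq 0}$), it suffices to prove
\begin{align*}
\eta_\nu(2y[q],\pi h;s+k,s)&=\pi^{\lambda(\nu-\lambda)/2}\,\Gamma_{\nu-\lambda}(2s+k-\kappa(\nu))\,\det(2y[q])^{\kappa(\nu)-k-2s}\\
&\quad\cdot\,\eta^*_\lambda\bigl(2y[qw],\pi h_0;\,s+k+\tfrac{\lambda-\nu}{2},\,s+\tfrac{\lambda-\nu}{2}\bigr).
\end{align*}

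Since $w\in\mathbb{Z}_{\mathrm{prim}}^{(\nu,\lambda)}$, it can be completed to a unimodular matrix, say $w=U\binom{1_\lambda}{0}$ with $U\in GL_\nu(\mathbb{Z})$; then $h=h_0[{}^tw]=\mathrm{diag}(h_0,0_{\nu-\lambda})[{}^tU]$. Applying $\eta^*_\nu(g[a],h'[{}^ta^{-1}];\alpha,\beta)=\eta^*_\nu(g,h';\alpha,\beta)$ with $a=U$ --- note that $\det(2y[qU])=\det(2y[q])$ and that the top-left $\lambda\times\lambda$ block of $(2y[q])[U]$ is $(2y[q])[w]=2y[qw]$ --- reduces everything to evaluating $\eta_\nu(G,\mathrm{diag}(h_1,0);\alpha,\beta)$ for a general $G\in P_\nu$ with top-left block $G_1\in P_\lambda$, where I abbreviate $h_1:=\pi h_0$, $\alpha:=s+k$, $\beta:=s$.

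To evaluate this, write $x=\bigl(\begin{smallmatrix}x_1&x_2\\{}^tx_2&x_3\end{smallmatrix}\bigr)\in V_\nu$ in $(\lambda,\nu-\lambda)$-blocks, so that $dx=dx_1\,dx_2\,dx_3$ and $\sigma(\mathrm{diag}(h_1,0)\,x)=\sigma(h_1x_1)$. Using $\det\bigl(x\pm\mathrm{diag}(h_1,0)\bigr)=\det(x_3)\det(u\pm h_1)$ with the Schur complement $u:=x_1-x_2x_3^{-1}\,{}^tx_2$, the translation $x_1\mapsto u$ (Jacobian $1$) turns the domain $x\pm\mathrm{diag}(h_1,0)>0$ into $\{x_3>0\}\times\{u\pm h_1>0\}$, and the integral factors as a product of three integrals. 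The $u$-integral is precisely $\eta_\lambda\bigl(G_1,h_1;\alpha-\tfrac{\nu-\lambda}{2},\beta-\tfrac{\nu-\lambda}{2}\bigr)$, the parameter shift coming from $\kappa(\nu)-\kappa(\lambda)=\tfrac{\nu-\lambda}{2}$; the $x_2$-integral over $\mathbb{R}^{\lambda(\nu-\lambda)}$ is Gaussian with quadratic form $G_1\otimes x_3^{-1}$, producing $\pi^{\lambda(\nu-\lambda)/2}\det(G_1)^{-(\nu-\lambda)/2}\det(x_3)^{\lambda/2}$ and, on completing the square, absorbing the linear term (built from the off-diagonal block $G_2$) into the Schur complement $G_3-{}^tG_2G_1^{-1}G_2$; and the remaining $x_3$-integral over $P_{\nu-\lambda}$ is a matrix gamma integral equal to $\Gamma_{\nu-\lambda}(\alpha+\beta-\kappa(\nu))\det\bigl(G_3-{}^tG_2G_1^{-1}G_2\bigr)^{-(\alpha+\beta-\kappa(\nu))}$, in which $\det\bigl(G_3-{}^tG_2G_1^{-1}G_2\bigr)=\det(G)/\det(G_1)$.

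It remains to collect the scalar factors. The powers of $\det(G_1)$ combine to $\det(G_1)^{-(\nu-\lambda)/2+(\alpha+\beta-\kappa(\nu))}$, and a short computation shows this exponent equals $\bigl(\alpha-\tfrac{\nu-\lambda}{2}\bigr)+\bigl(\beta-\tfrac{\nu-\lambda}{2}\bigr)-\kappa(\lambda)$, which is exactly the normalization turning $\eta_\lambda$ into $\eta^*_\lambda$ for those parameters; what survives of $\det(G)$ is $\det(G)^{-(\alpha+\beta-\kappa(\nu))}=\det(2y[q])^{\kappa(\nu)-k-2s}$. Substituting $G_1=2y[qw]$, $h_1=\pi h_0$, $\alpha+\beta-\kappa(\nu)=2s+k-\kappa(\nu)$, and $\alpha-\tfrac{\nu-\lambda}{2}=s+k+\tfrac{\lambda-\nu}{2}$, $\beta-\tfrac{\nu-\lambda}{2}=s+\tfrac{\lambda-\nu}{2}$ gives the displayed formula for $\eta_\nu$, and multiplying through by the constant $(-1)^{k\nu/2}2^\nu\pi^{\nu\kappa(\nu)}\Gamma_\nu(s+k)^{-1}\Gamma_\nu(s)^{-1}$ from (\ref{xieta}) yields the Proposition. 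The one genuinely delicate point is this final bookkeeping: the powers of $\pi$, the power of $i$, and above all the powers of $\det(G_1)$ must conspire precisely so that the surviving $\lambda$-dimensional integral appears in its \emph{completed} form $\eta^*_\lambda$ rather than as $\eta_\lambda$; everything else is routine. $\square$
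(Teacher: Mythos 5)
Your argument is correct, and it supplies a proof where the paper gives none: Proposition \ref{xi-eta} is simply quoted from Mizumoto (\cite{Mi}, Lemma 1.4), and your derivation is essentially the standard one behind that lemma --- pass to $\eta_\nu$ via (\ref{xieta}) (with $i^{-k\nu}=(-1)^{k\nu/2}$ for even $k$), use the $GL_\nu(\mathbb{Z})$-covariance of $\eta_\nu^*$ to replace $\pi h=\pi\,\mathrm{diag}(h_0,0_{\nu-\lambda})[{}^tU]$ by $\pi\,\mathrm{diag}(h_0,0_{\nu-\lambda})$, and then integrate out the off-rank blocks by a Schur-complement substitution, a Gaussian integral, and a Siegel gamma integral. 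I checked the bookkeeping: the $x_3$-integral has exponent $\alpha+\beta-2\kappa(\nu)+\lambda/2=(\alpha+\beta-\kappa(\nu))-\kappa(\nu-\lambda)$, giving $\Gamma_{\nu-\lambda}(2s+k-\kappa(\nu))\det(S)^{-(2s+k-\kappa(\nu))}$ with $\det(S)=\det(G)/\det(G_1)$, and the total $\det(G_1)$-exponent $-(\nu-\lambda)/2+\alpha+\beta-\kappa(\nu)$ indeed equals $\alpha'+\beta'-\kappa(\lambda)$ for the shifted parameters, so the surviving factor is exactly $\eta_\lambda^*$ and the leftover is $\det(2y[q])^{\kappa(\nu)-k-2s}$, with $\pi^{\nu\kappa(\nu)}\cdot\pi^{\lambda(\nu-\lambda)/2}$ matching the stated power of $\pi$. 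The only cosmetic remark is that your appeal to analytic continuation at the outset is superfluous, since for $\mathrm{Re}(s)>\nu$ all the integrals converge absolutely and Fubini already justifies every step.
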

Let $m,\,\lambda\in\mathbb{Z}$ with $m\geqq \lambda\geqq 1$. We define the subgroup
$\Delta_\lambda^{(m)}$ of $GL_m(\mathbb{Z})$ by
$$
\Delta_\lambda^{(m)}:=\left\{ \begin{pmatrix} * & * \\ 0^{(m-\lambda,\lambda)} & * \end{pmatrix}\in
GL_m(\mathbb{Z}) \;\right\}.
$$
For $r\in \mathbb{Z}_{{\rm prim}}^{(m,\lambda)}$, $u_r$ is an element of $GL_m(\mathbb{Z})$
corresponding to $r$ under a bijection
\begin{align*}
\mathbb{Z}_{{\rm prim}}^{(m,\lambda)}/&GL_\lambda(\mathbb{Z})
 \qquad
\longleftrightarrow
\qquad
GL_m(\mathbb{Z})/\Delta_\lambda^{(m)} \\
& r\qquad\qquad\quad  \longmapsto\qquad\qquad u_r
\end{align*}
which is determined up to the right action of $\Delta_\lambda^{(m)}$.

For $y\in P_m$, we write the Jacobi decomposition of $y[u_r]$ as
\begin{equation*}
y[u_r]={\rm diag}(y[r],g(y,u_r))\begin{bmatrix} 1_\lambda & b \\ 0 & 1_{m-\lambda} \end{bmatrix}.
\end{equation*}
Explicitly, we place $u_r=(r\,r_1)$ and then
\begin{equation}
\label{gyur}
g(y,u_r)=y[r_1]-(y[r])^{-1}[{}^tryr_1].
\end{equation}
\vspace{3mm}
\\
\quad Next, we provide a definition of Koecherer--Maass zeta functions.
For $1\leqq \nu\leqq m$ and $g\in P_m$, we define
\begin{equation}
\label{KM}
\zeta_\nu^{(m)}(g,s):=\sum_{a\in \mathbb{Z}_{{\rm prim}}^{(m,\nu)}/GL_\nu(\mathbb{Z})}
\text{det}(g[a])^{-s}
\end{equation}
which is convergent for $\text{Re}(s)>m/2$. By definition,
$$
\zeta_m^{(m)}(g,s)=\text{det}(g)^{-s}.
$$
For later purposes, we put
$$
\zeta_0^{(m)}(*,s):=1\qquad
\text{for all}
\quad m\in\mathbb{Z}_{\geqq 0}.
$$
Mizumoto's refinement of Maass' expression is as follows:
\begin{Thm} (Mizumoto \cite{Mi}, Theorem 1.8) 
\label{FourierMi}
{\it
For $m\in\mathbb{Z}_{>0}$, $k\in 2\mathbb{Z}_{\geqq 0}$, and ${\rm Re}(s)>m$,
the Eisenstein series $E_k^{(m)}(z,s)$ has the following expression:
\begin{equation}
\label{MiEx}
E_k^{(m)}(z,s)=\sum_{\nu=0}^m\sum_{\lambda=0}^\nu F_{k,\nu,\lambda}^{(m)}(z,s)
\end{equation}
where
\begin{align}
\label{F0}
& F_{k,\nu,0}^{(m)}(z,s) \nonumber \\
& =(-1)^{k\nu/2}2^\nu\pi^{\nu\kappa(\nu)}\Gamma_\nu(2s+k-\kappa(\nu))
            \Gamma_\nu(s)^{-1}\Gamma_\nu(s+k)^{-1} \nonumber \\
& \cdot S_\nu(0_\nu,2s+k)\,{\rm det}(y)^s\,\zeta_\nu^{(m)}(2y,2s+k-\kappa(\nu)),
\end{align}
for $0\leqq \nu\leqq m$, and
\begin{equation}
\label{Fb}
F_{k,\nu,\lambda}^{(m)}(z,s)
=
\sum_{h\in\Lambda_{\lambda}^{(\lambda)}}\sum_{r\in \mathbb{Z}_{{\rm prim}}^{(m,\lambda)}/GL_\lambda(\mathbb{Z})}
b_{k,\nu,\lambda}^{(m)}(h[{}^tr],y,s)\boldsymbol{e}(\sigma(h[{}^tr]x))
\end{equation}
for $1\leqq \lambda\leqq \nu\leqq m$ with
\begin{align}
& b_{k,\nu,\lambda}^{(m)}(h[{}^tr],y,s) \nonumber \\
             &\quad := (-1)^{k\nu/2}2^\nu\pi^{\nu\kappa(\nu)+\lambda(\nu-\lambda)/2}
              \Gamma_{\nu-\lambda}(2s+k-\kappa(\nu))\Gamma_\nu(s)^{-1}\Gamma_\nu(s+k)^{-1}
              \nonumber \\
              &\qquad \cdot S_\nu({\rm diag}(h,0_{\nu-\lambda}),2s+k)){\rm det}(y)^s{\rm det}(2y[r])^{\kappa(\nu)-k-2s}
              \nonumber \\
              &\qquad \cdot \eta_\lambda^*(2y[r],\pi h;s+k+(\lambda-\nu)/2,s+(\lambda-\nu)/2)\nonumber \\
             &\qquad  \cdot \zeta_{\nu-\lambda}^{(m-\lambda)}(2g(y,u_r),2s+k-\kappa(\nu)) \label{Fb1}.
\end{align}
Here, $\zeta_\nu^{(m)}(g,s)$ for $0\leqq \nu\leqq m$ is the Koecher--Maass zeta function
defined in {\rm (\ref{KM})}, and $g(y,u_r)$ is defined by {\rm (\ref{gyur})}. Matrix $h[{}^tr]$ runs
over the set $\Lambda_m^{(\lambda)}$ exactly once if $h$ runs over $\Lambda_\lambda^{(\lambda)}$
and $r$ runs over a complete set of representatives of 
$\mathbb{Z}_{{\rm prim}}^{(m,\lambda)}/GL_\lambda(\mathbb{Z})$.}
\end{Thm}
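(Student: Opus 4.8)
\medskip
\noindent\emph{Outline of a proof.} The plan is to deduce (\ref{MiEx}) from Maass's Fourier expansion (\ref{ExMaass}) in three steps: a decomposition of the inner $h$-sum by rank, substitution of the reduction formulas for $\xi_\nu$, and a reorganization of the surviving lattice sums into Koecher--Maass zeta functions. I would work throughout in the region $\text{Re}(s)>m$ of the statement, where (\ref{ExMaass}) and all the series appearing below converge absolutely, so that the rearrangements are legitimate. Fixing $\nu$ with $0\leqq\nu\leqq m$, I would split $\sum_{h\in\Lambda_\nu}$ according to $\lambda:=\text{rank}(h)$. For $\lambda=0$ one has $h=0_\nu$, and $\xi_\nu(y[q],0_\nu;s+k,s)$ is in closed form by Shimura's formula (\ref{xi0}) (taken with $\alpha=s+k$, $\beta=s$, $g=y[q]$); using that $k$ is even to write $i^{-\nu k}=(-1)^{k\nu/2}$ and $2^{\nu(1-\kappa(\nu))}(2\pi)^{\nu\kappa(\nu)}=2^\nu\pi^{\nu\kappa(\nu)}$, and then summing the remaining factor $\text{det}(2y[q])^{\kappa(\nu)-2s-k}$ over $q\in\mathbb{Z}_{\text{prim}}^{(m,\nu)}/GL_\nu(\mathbb{Z})$, one recognizes $\zeta_\nu^{(m)}(2y,2s+k-\kappa(\nu))$ from the definition (\ref{KM}); together with the overall $\text{det}(y)^s$ and with $S_\nu(0_\nu,2s+k)$ this reproduces $F_{k,\nu,0}^{(m)}$ exactly as in (\ref{F0}), and the case $\nu=0$ gives the leading term $\text{det}(y)^s$.

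For $1\leqq\lambda\leqq\nu$ I would first apply Mizumoto's Lemma 1.1 to write each $h\in\Lambda_\nu^{(\lambda)}$ uniquely as $h=h_0[{}^tw]$ with $h_0\in\Lambda_\lambda^{(\lambda)}$ and $w\in\mathbb{Z}_{\text{prim}}^{(\nu,\lambda)}/GL_\lambda(\mathbb{Z})$. Since $w$ completes to an element of $GL_\nu(\mathbb{Z})$, $h$ is $GL_\nu(\mathbb{Z})$-equivalent to $\text{diag}(h_0,0_{\nu-\lambda})$, and by the $GL_\nu(\mathbb{Z})$-invariance of the Siegel series, $S_\nu(h,2s+k)=S_\nu(\text{diag}(h_0,0_{\nu-\lambda}),2s+k)$. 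Proposition \ref{xi-eta} then rewrites $\xi_\nu(y[q],h;s+k,s)$ as the product of the $\Gamma$-factors appearing in (\ref{Fb1}), the factor $\text{det}(2y[q])^{\kappa(\nu)-k-2s}$, and $\eta_\lambda^*(2y[qw],\pi h_0;s+k+(\lambda-\nu)/2,s+(\lambda-\nu)/2)$; and, since $h[{}^tq]=h_0[{}^t(qw)]$, the exponential becomes $\boldsymbol{e}(\sigma(h_0[{}^t(qw)]x))$.

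The substantive step is the reorganization of the double sum over $(w,q)\in(\mathbb{Z}_{\text{prim}}^{(\nu,\lambda)}/GL_\lambda(\mathbb{Z}))\times(\mathbb{Z}_{\text{prim}}^{(m,\nu)}/GL_\nu(\mathbb{Z}))$. Geometrically, $q$ determines a primitive rank-$\nu$ sublattice $L\subseteq\mathbb{Z}^m$ and, with $w$, a primitive rank-$\lambda$ sublattice $L'\subseteq L$; reversing the order, one first chooses $L'$ --- equivalently a representative $r\in\mathbb{Z}_{\text{prim}}^{(m,\lambda)}$ of a class modulo $GL_\lambda(\mathbb{Z})$ with columns spanning $L'$, so that $h_0[{}^t(qw)]=h_0[{}^tr]$ --- and then chooses $L$ with $L'\subseteq L\subseteq\mathbb{Z}^m$ primitive of rank $\nu$, i.e.\ a primitive rank-$(\nu-\lambda)$ sublattice of $\mathbb{Z}^m/L'\cong\mathbb{Z}^{m-\lambda}$, equivalently an element $r'\in\mathbb{Z}_{\text{prim}}^{(m-\lambda,\nu-\lambda)}/GL_{\nu-\lambda}(\mathbb{Z})$. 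I would make this explicit through the matrix $u_r=(r\;r_1)\in GL_m(\mathbb{Z})$ attached to $r$ (defined up to right multiplication by $\Delta_\lambda^{(m)}$) and a $GL_\nu(\mathbb{Z})$-reduction of $q$ to the block shape $u_r\begin{pmatrix}1_\lambda&*\\0&r'\end{pmatrix}$: then $\eta_\lambda^*(2y[qw],\pi h_0;\cdot)=\eta_\lambda^*(2y[r],\pi h_0;\cdot)$, and, combining the Jacobi decomposition of $y[u_r]$ with a Schur-complement computation, $\text{det}(y[q])=\text{det}(y[r])\,\text{det}(g(y,u_r)[r'])$ with $g(y,u_r)$ as in (\ref{gyur}). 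Summing $\text{det}(2g(y,u_r)[r'])^{\kappa(\nu)-2s-k}$ over $r'$ produces $\zeta_{\nu-\lambda}^{(m-\lambda)}(2g(y,u_r),2s+k-\kappa(\nu))$ --- which I would check does not depend on the choice of $u_r$ within its $\Delta_\lambda^{(m)}$-coset, so the sum over $r$ is well-posed --- while the powers of $2$ recombine, $2^{\lambda(\kappa(\nu)-2s-k)}$ from $\text{det}(2y[r])$ times $2^{(\nu-\lambda)(\kappa(\nu)-2s-k)}$ from the zeta giving the $2^{\nu(\kappa(\nu)-2s-k)}$ coming from $\text{det}(2y[q])$. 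Assembling these ingredients gives $F_{k,\nu,\lambda}^{(m)}$ exactly as in (\ref{Fb})--(\ref{Fb1}); and the assertion that $h_0[{}^tr]$ runs over $\Lambda_m^{(\lambda)}$ exactly once is Mizumoto's Lemma 1.1 with $m$ in place of $\nu$.

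The step I expect to be the main obstacle is this last reorganization: establishing that $(w,q)\leftrightarrow(r,r')$ is a genuine bijection and keeping track of the $GL_\lambda(\mathbb{Z})$-, $GL_{\nu-\lambda}(\mathbb{Z})$- and $\Delta_\lambda^{(m)}$-ambiguities precisely enough that $\eta_\lambda^*(2y[r],\pi h_0;\cdot)$ and $\zeta_{\nu-\lambda}^{(m-\lambda)}(2g(y,u_r),\cdot)$ descend to well-defined functions (this is subtle because $\eta_\lambda^*(g[V],\pi h_0;\cdot)=\eta_\lambda^*(g,\pi(h_0[{}^tV]);\cdot)$, so one must use that the relevant $V$ lie in $\text{Aut}(h_0)$). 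Everything else --- the identity $h[{}^tq]=h_0[{}^t(qw)]$, the invariance of $\eta_\lambda^*$ and of the Siegel series, the Schur-complement determinant identity, and the matching of the $2$-, $\pi$- and $\Gamma$-constants after inserting Proposition \ref{xi-eta} and (\ref{xi0}) --- is routine though somewhat lengthy bookkeeping.
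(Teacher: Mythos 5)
The paper offers no proof of this statement: it is imported verbatim as Mizumoto's Theorem 1.8 from \cite{Mi}, so there is no in-paper argument to compare against. Your outline is, as far as it goes, a faithful reconstruction of how the result is actually obtained there: split Maass's expansion (\ref{ExMaass}) by the rank $\lambda$ of $h$, dispose of $\lambda=0$ via Shimura's closed form (\ref{xi0}) (your constant bookkeeping $i^{-\nu k}=(-1)^{k\nu/2}$ and $2^{\nu(1-\kappa(\nu))}(2\pi)^{\nu\kappa(\nu)}=2^{\nu}\pi^{\nu\kappa(\nu)}$ is right, and the $q$-sum of $\det(2y[q])^{\kappa(\nu)-2s-k}$ is indeed (\ref{KM})), and for $\lambda\geqq 1$ insert Proposition \ref{xi-eta} and regroup the $(w,q)$-sum as a sum over the inner primitive sublattice $r$ and an outer one $r'$, whose $\det(2g(y,u_r)[r'])$-sum assembles into the Koecher--Maass factor via the Jacobi/Schur-complement identity. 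One correction to your well-definedness discussion, which you rightly single out as the delicate point: the resolution is not that the connecting matrices $V\in GL_\lambda(\mathbb{Z})$ lie in $\mathrm{Aut}(h_0)$ (they need not); rather, replacing $r$ by $rV$ forces $h_0\mapsto h_0[{}^tV^{-1}]$ by the uniqueness in Mizumoto's Lemma 1.1, and the invariance $\eta_\lambda^*(g[a],h[{}^ta^{-1}];\alpha,\beta)=\eta_\lambda^*(g,h;\alpha,\beta)$ shows that $\eta_\lambda^*(2y[r],\pi h_0;\cdot)$, like $h_0[{}^tr]$ itself, is unchanged under this simultaneous substitution --- so the summand descends to a function of $h_0[{}^tr]\in\Lambda_m^{(\lambda)}$ alone, which is exactly what (\ref{Fb}) requires. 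With that repair, and with the $(w,q)\leftrightarrow(r,r')$ bijection written out in full, your outline closes; for the complete details one should still defer to \cite{Mi}.
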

\subsection{Siegel series}
\label{sec3-4}
In this section, we summarize the results of the Siegel series $S_\nu(h,s)$ that appear in
the Fourier expansions (\ref{ExMaass}) and (\ref{Fb1}).

For $h\in\Lambda_\lambda^{(\lambda)}$, we set
$$
d(h):=(-1)^{[\lambda/2]}2^{-\delta((\lambda-1)/2)}\text{det}(2h)
$$
where
$$
\delta (x):=\begin{cases} 1  &  x\in\mathbb{Z},\\
                                0  &   x\notin \mathbb{Z}
                                \end{cases}
$$
for $x\in\mathbb{Q}$. By \cite{Mi}, (5.1), 
\begin{align}
\label{Siegelreduce}
S_\nu(\text{diag}(h,0_{\nu-\lambda}),s) &=\zeta(s+\lambda-\nu)\zeta(s)^{-1}\nonumber\\
                                                  & \cdot \prod_{j=1}^{\nu-\lambda}(\zeta(2s-\nu-j)\,\zeta(2s-2j)^{-1})
                                                  \nonumber\\
                                                   & \cdot S_\lambda(h,s-\nu+\lambda)
\end{align}
and
\begin{align*}
& S_\lambda(h,s)=\sum_{d\in A(h)}(\text{det}(d))^{\lambda+1-2s}\widehat{S}_{\lambda}(h[d^{-1}],s),\\
& \widehat{S}_{\lambda}(h,s)=\zeta(s)^{-1}\prod_{j=1}^{[\lambda/2]}\zeta(2s-2j)^{-1}
             L\left(s-\lambda/2,\left(\frac{d(h)}{*} \right)\right)^{\delta(\lambda/2)}\prod_p a_p(h,s)
\end{align*}
where $L\left(s,\left(\frac{d(h)}{*}\right)  \right)$ is Dirichlet $L$-function associated to the quadratic
character $\left(\frac{d(h)}{*}\right)$, the product of $p$ runs over the prime divisors of $d(h)$,
$$
A(h):=GL_\lambda(\mathbb{Z})\backslash
     \{\,d\in\mathbb{Z}^{(\lambda)}\,\mid\, \text{det}(d)\ne 0\;\;\text{and}\;\; h[d^{-1}]\in\Lambda_\lambda\,\},
$$
and from \cite{Bo}, we have
\begin{align*}
& a_p(h,s)=\\
&
\begin{cases}
\prod_{j=1}^{r/2}(1-p^{2j-1+\lambda-2s}) & (\lambda,r) \equiv (1,0) \pmod{2},\\
(1+\lambda_p(h)p^{(\lambda+r)/2-s})\prod_{j=1}^{(r-1)/2}(1-p^{2j-1+\lambda-2s}) 
                                                    & (\lambda,r) \equiv (1,1) \pmod{2},\\   
\prod_{j=1}^{(r-1)/2}(1-p^{2j+\lambda-2s})
                                                   & (\lambda,r) \equiv (0,1) \pmod{2},\\ 
(1+\lambda_p(h)p^{(\lambda+r)/2-s})\prod_{j=1}^{r/2-1}(1-p^{2j+\lambda-2s}) 
                                                  & (\lambda,r) \equiv (0,0) \pmod{2}.
\end{cases}
\end{align*}
Here, $r:=r(p)$ is the maximal number, which is the condition 
$h[u] \equiv \begin{pmatrix}h^* & 0 \\ 0 & 0_r \end{pmatrix} \pmod{p}$
for some $u\in\mathbb{Z}^{(\lambda)}$ and $\lambda_p(h):=\left(\frac{d(h^*)}{p} \right)$.
\vspace{2mm}
\\
\begin{Rem}
(1)\quad We understand that $S_0(*,s)=1$.  Therefore, from (\ref{Siegelreduce}), we obtain the following.
Formula for $S_\nu(0_\nu,s)$:
\begin{align}
\label{S0}
S_\nu(0_\nu,s) &=\zeta(s-\nu)\zeta(s)^{-1}\prod_{\nu=1}^\nu(\zeta(2s-\nu-j)\,\zeta(2s-2j)^{-1}) \nonumber\\
               &= \zeta(s-\nu)\zeta(s)^{-1}\prod_{\nu=1}^{[\nu/2]}(\zeta(2s-2\nu-1+2j)\,\zeta(2s-2j)^{-1}).
\end{align}
(2)\quad In the following discussion, the concrete form of $a_p(h,s)$ is not needed, only its
holomorphy in $s$.
\end{Rem}
\subsection{Koecher--Maass zeta function}
\label{sec:3-5}
The Koecher--Maass zeta function $\zeta_\nu^{(m)}(g,s) $ in $ \S$ \ref{FourierEx}.
Analytic properties of this function are important for the analysis of the Fourier coefficient
$F_{k,\nu,\lambda}^{(m)}(z,s)$. In this section, we recall Arakawa's results for the Koecher--Maass
zeta function.

For $1\leqq \nu\leqq m$ and $g\in P_m$, we define the completed Koecher--Maass zeta
function by
\begin{equation}
\label{CompKM}
\xi_{\nu}^{(m)}(g,s):=\prod_{i=0}^{\nu-1}\xi(2s-i)\,\zeta_\nu^{(m)}(g,s)
\end{equation}
where
$$
\xi(s):=\pi^{-s/2}\Gamma(s/2)\,\zeta(s)
$$
and we understand
$$
\xi_0^{(m)}(g,s):=1.
$$
The following result is due to Arakawa, which plays an important role in our investigation.
\begin{Prop}\;(Arakawa \cite{Ara})\quad 
\label{Ara}
{\it (1)\; Suppose $m\geqq 2\nu-1$.
The function $\xi_{\nu}^{(m)}(g,s)$ has simple poles at $s=0,\,\frac{1}{2},\,\cdots\,,\frac{\nu-1}{2}$
and $s=\frac{m-\nu+1}{2},\,\cdots\,,\frac{m}{2}$. For $0\leqq \mu\leqq\nu-1$, the residues of
$\xi_{\nu}^{(m)}(g,s)$ at $s=\frac{\mu}{2}$ and $s=\frac{m-\mu}{2}$ are given by
\begin{align}
& 
\underset{s=\mu/2}{\rm Res}\xi_{\nu}^{(m)}(g,s)=-\frac{1}{2}v(\nu-\mu)\xi_{\mu}^{(m)}(g,\tfrac{\nu}{2}),
\label{Res1} \\
&
\underset{s=(m-\mu)/2}{\rm Res}\xi_{\nu}^{(m)}(g,s)=\frac{1}{2}v(\nu-\mu)
                                               {\rm det}(g)^{-\frac{\nu}{2}}\xi_{\mu}^{(m)}(g^{-1},\tfrac{\nu}{2}),
\label{Res2}
\end{align}
where
$$
v(\nu)=
\begin{cases}
\displaystyle \prod_{i=2}^\nu \xi(i)   &  (\nu\geqq 2),\\
1                                             & (\nu=1).
\end{cases}
$$
(2)\; Suppose $\nu\leqq m\leqq 2\nu-2$. The function 
$\xi_{\nu}^{(m)}(g,s)$ has poles at $s=0,\,\frac{1}{2},\,\cdots\,,\frac{m}{2}$ of which
$s=0,\,\frac{1}{2},\,\cdots\,,\frac{m-\nu}{2}$ and $s=\frac{\nu}{2},\,\frac{\nu+1}{2},\,\cdots\,,\frac{m}{2}$
are simples poles. The poles at $s=\frac{m-\nu+1}{2},\,\frac{m-\nu+2}{2},\,\cdots\,,\frac{\nu-1}{2}$ are
double poles. For $0\leqq \mu\leqq m-\nu$, the residues of $\xi_{\nu}^{(m)}(g,s)$ at $s=\frac{\mu}{2}$
and $s=\frac{m-\mu}{2}$ are given by {\rm (\ref{Res1}) and (\ref{Res2})}, respectively.
}
\end{Prop}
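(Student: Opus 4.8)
The plan is to set up a theta--integral representation of $\xi_\nu^{(m)}(g,s)$, read off a functional equation from the transformation law of the relevant theta series, and then locate the poles by peeling off the contributions of lower rank and inducting on $\nu$. First I would pass from primitive matrices to arbitrary matrices of rank $\nu$: using the factorisation $a=a_0 d$ with $a_0\in\mathbb{Z}^{(m,\nu)}$ primitive and $d\in\mathbb{Z}^{(\nu)}$, $\det d\ne 0$, together with the sublattice--counting identity $\sum_{[d]}\abs{\det d}^{-2s}=\prod_{j=0}^{\nu-1}\zeta(2s-j)$, one obtains
\[
\mathcal{Z}_\nu^{(m)}(g,s):=\sum_{a}\det(g[a])^{-s}=\left(\prod_{j=0}^{\nu-1}\zeta(2s-j)\right)\zeta_\nu^{(m)}(g,s),
\]
the left--hand sum running over $\mathbb{Z}^{(m,\nu)}$ of rank $\nu$ modulo $GL_\nu(\mathbb{Z})$. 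Since $\prod_{i=0}^{\nu-1}\xi(2s-i)=\pi^{-\nu s}\Gamma_\nu(s)\prod_{i=0}^{\nu-1}\zeta(2s-i)$, this gives the clean identity $\xi_\nu^{(m)}(g,s)=\pi^{-\nu s}\Gamma_\nu(s)\,\mathcal{Z}_\nu^{(m)}(g,s)$. Now I insert the matrix gamma integral $\Gamma_\nu(s)\det(t)^{-s}=\int_{P_\nu}e^{-\sigma(tY)}\det(Y)^{s-\kappa(\nu)}\,dY$ (valid for $t\in P_\nu$) with $t=g[a]$, and unfold the action $Y\mapsto Y[{}^tv]$ of $GL_\nu(\mathbb{Z})$, which replaces the sum over classes by the sum over all rank--$\nu$ matrices $a$ at the cost of restricting $Y$ to a Minkowski fundamental domain $\mathcal{F}_\nu\subset P_\nu$:
\[
\xi_\nu^{(m)}(g,s)=\pi^{-\nu s}\int_{\mathcal{F}_\nu}\left(\vartheta_\nu(g;Y)-\sum_{\mu=0}^{\nu-1}\Theta_\mu(g;Y)\right)\det(Y)^{s-\kappa(\nu)}\,dY,
\]
where $\vartheta_\nu(g;Y)=\sum_{a\in\mathbb{Z}^{(m,\nu)}}e^{-\sigma(g[a]Y)}$ and $\Theta_\mu(g;Y)$ denotes its rank--$\mu$ part.

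The analytic heart is the transformation formula for $\vartheta_\nu$. Viewing $a\mapsto\sigma(g[a]Y)$ as the positive definite quadratic form $g\otimes Y$ on $\mathbb{R}^{m\nu}$, whose determinant equals $\det(g)^{\nu}\det(Y)^{m}$, Poisson summation yields
\[
\vartheta_\nu(g;Y)=\pi^{m\nu/2}\det(g)^{-\nu/2}\det(Y)^{-m/2}\,\vartheta_\nu(\pi^2 g^{-1};Y^{-1}).
\]
Breaking the $Y$--integral at $\det Y=1$, substituting $Y\leftrightarrow Y^{-1}$ on the part $\det Y<1$ (the invariant measure $\det(Y)^{-\kappa(\nu)}\,dY$ carries $\mathcal{F}_\nu$ to a fundamental domain for the same group), and inserting the formula above, one gets the meromorphic continuation of $\xi_\nu^{(m)}(g,s)$ to all of $\mathbb{C}$, together with the functional equation
\[
\xi_\nu^{(m)}(g,s)=\det(g)^{-\nu/2}\,\xi_\nu^{(m)}\!\left(g^{-1},\tfrac{m}{2}-s\right).
\]
Since $s\mapsto\tfrac{m}{2}-s$ carries $\{0,\tfrac{1}{2},\dots,\tfrac{\nu-1}{2}\}$ onto $\{\tfrac{m-\nu+1}{2},\dots,\tfrac{m}{2}\}$, it then suffices to analyse the poles in the first set, and (\ref{Res2}) follows from (\ref{Res1}): indeed $\underset{s=(m-\mu)/2}{\rm Res}\,\xi_\nu^{(m)}(g,s)=-\det(g)^{-\nu/2}\underset{w=\mu/2}{\rm Res}\,\xi_\nu^{(m)}(g^{-1},w)$, which is (\ref{Res2}) once (\ref{Res1}) is known.

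It remains to prove (\ref{Res1}), i.e.\ that the ``low'' poles are simple, located at $s=\tfrac{\mu}{2}$ for $0\le\mu\le\nu-1$, with residue $-\tfrac{1}{2}v(\nu-\mu)\,\xi_\mu^{(m)}(g,\tfrac{\nu}{2})$. In the integral representation these come entirely from the subtracted terms $\Theta_\mu(g;Y)$. Using the standard decomposition $a=bc$ with $b\in\mathbb{Z}_{{\rm prim}}^{(m,\mu)}$ and $c\in\mathbb{Z}^{(\mu,\nu)}$ of rank $\mu$ (unique modulo the diagonal $GL_\mu(\mathbb{Z})$--action), one has $\sigma(g[a]Y)=\sigma\!\big(g[b]\cdot Y[{}^tc]\big)$, so the sum over $b$ rebuilds $\zeta_\mu^{(m)}(g,\cdot)$ while the integral over $c$ is an auxiliary Koecher--Maass / Minkowski integral of rank $\nu-\mu$; its value at the argument forced by the structure is exactly where the factor $v(\nu-\mu)$ (a product of completed zeta values) and the special point $\tfrac{\nu}{2}$ arise, after which the inductive hypothesis applied to $\xi_\mu^{(m)}$ with $\mu<\nu$ gives (\ref{Res1}). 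The base case $\nu=1$ is classical, since $\xi_1^{(m)}(g,s)=\tfrac{1}{2}\pi^{-s}\Gamma(s)\sum_{0\ne a\in\mathbb{Z}^{(m,1)}}(g[a])^{-s}$ is one half of the completed Epstein zeta function, with its well--known simple poles at $s=0$ and $s=\tfrac{m}{2}$ and residue $\tfrac{1}{2}\det(g)^{-1/2}$ at $s=\tfrac{m}{2}$, in agreement with both assertions for $\nu=1$. Finally, the dichotomy in the statement is precisely the fact that a ``low'' pole $s=\tfrac{\mu}{2}$ and a ``high'' pole $s=\tfrac{m-\mu'}{2}$ coincide exactly when $\mu+\mu'=m$ with $0\le\mu,\mu'\le\nu-1$, which is possible if and only if $m\le 2\nu-2$; at such a coincidence the two simple poles merge into a double pole, whereas if $m\ge 2\nu-1$ the two clusters are disjoint and every pole is simple.

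The step I expect to be the main obstacle is the bookkeeping just sketched: carrying out the $c$--integral unfolding with due care for the nontrivial--stabiliser strata of $\mathcal{F}_\nu$ and for the interplay of the $GL_\nu(\mathbb{Z})$-- and $GL_\mu(\mathbb{Z})$--actions, identifying the resulting auxiliary series as a completed Koecher--Maass zeta function of rank $\nu-\mu$, and pinning down the exact constant $v(\nu-\mu)$ together with the point $\tfrac{\nu}{2}$ at which $\xi_\mu^{(m)}$ is evaluated. The remaining ingredients --- the matrix gamma integral, the Poisson/theta transformation, the truncation at $\det Y=1$, and the deduction of (\ref{Res2}) from (\ref{Res1}) via the functional equation --- are the standard Maass--Siegel machinery.
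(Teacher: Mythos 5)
The first thing to say is that the paper does not prove this proposition at all: it is quoted verbatim from Arakawa's Tohoku paper \cite{Ara} and used as a black box, so there is no internal argument to measure your attempt against. Judged on its own terms, your outline is the classical Koecher--Maass/Epstein strategy, and the formal skeleton is sound. I checked the individual reductions: the primitive decomposition $a=a_0d$ and the identity $\sum_{[d]}\abs{\det d}^{-2s}=\prod_{j=0}^{\nu-1}\zeta(2s-j)$ are correct; the identity $\prod_{i=0}^{\nu-1}\xi(2s-i)=\pi^{-\nu s}\Gamma_\nu(s)\prod_{i=0}^{\nu-1}\zeta(2s-i)$ is correct; the Poisson/theta transformation with determinant $\det(g)^\nu\det(Y)^m$ is correct; the functional equation $\xi_\nu^{(m)}(g,s)=\det(g)^{-\nu/2}\xi_\nu^{(m)}(g^{-1},\tfrac{m}{2}-s)$ is exactly the involution that exchanges (\ref{Res1}) and (\ref{Res2}) (including the sign coming from $dw=-ds$), and it is consistent with the explicitly computable case $\nu=m$; the base case $\nu=1$ reproduces the Epstein residues $-\tfrac12$ at $s=0$ and $\tfrac12\det(g)^{-1/2}$ at $s=m/2$, matching the proposition with $\mu=0$. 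Your explanation of the simple/double pole dichotomy in terms of the coincidence $\mu+\mu'=m$, $0\le\mu,\mu'\le\nu-1$, is also the right picture (modulo verifying that the two merging simple-pole contributions do not cancel).

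The genuine gap is the step you yourself flag as ``the main obstacle,'' and it is not mere bookkeeping --- it is the entire content of the proposition beyond the functional equation. Two concrete problems. First, convergence: after subtracting $\sum_{\mu<\nu}\Theta_\mu(g;Y)$ the integrand is still not integrable against $\det(Y)^{s-\kappa(\nu)}$ over all of $\mathcal{F}_\nu$ for small ${\rm Re}(s)$; one must split at $\det Y=1$, apply the theta transformation to the full $\vartheta_\nu$ on one piece, and re-subtract, and the convergence of the rank-$\nu$ part over the non-compact reduced domain $\{Y\in\mathcal{F}_\nu:\det Y\ge 1\}$ requires reduction-theoretic estimates you have not supplied. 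Second, and more seriously, the truncated integrals $\int_{\mathcal{F}_\nu\cap\{\det Y\lessgtr 1\}}\Theta_\mu(g;Y)\det(Y)^{s-\kappa(\nu)}\,dY$, which carry all the poles, do not unfold naively: $\Theta_\mu$ is invariant only under a parabolic of $GL_\nu(\mathbb{Z})$, and the truncation $\det Y<1$ is not compatible with the parabolic unfolding needed to rebuild $\zeta_\mu^{(m)}(g,\cdot)$ times an auxiliary rank-$(\nu-\mu)$ integral. This incompatibility is precisely why the Epstein argument does not generalize mechanically and why the constant $v(\nu-\mu)$ and the evaluation point $s=\tfrac{\nu}{2}$ require real work (Arakawa's proof handles this by a more careful inductive analysis). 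So what you have is a plausible and internally consistent roadmap whose decisive computation --- the one that actually produces (\ref{Res1}) --- is missing; as a replacement for the citation to \cite{Ara} it is not yet a proof.
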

\begin{Rem}
When $m=2$ and $\nu=1$, the function $\zeta_1^{(2)}(g,s)$ appears as a simple
factor of Epstein's zeta function for $g$. Therefore, the residue and constant term at
$s=1$ is explicitly expressed by the Kronecker limit formula (see $\S$ \ref{Degree2}).
\end{Rem}
\section{Residue of Eisenstein series}
\label{Main}
In the rest of this paper, we assume that $m\geqq 2$.
In this section, we provide an explicit formula for
$$
\underset{s=m/2}{\text{Res}}E_0^{(m)}(z,s)
$$
which is the main result of this paper.

\subsection{Fourier coefficient of $\boldsymbol{E_0^{(m)}(z,s)}$}
\label{sec:4-1}
We recall the Fourier expansion
$$
E_0^{(m)}(z,s)=\sum_{\nu=0}^m\sum_{\lambda=0}^\nu F_{0,\nu,\lambda}^{(m)}(z,s)
$$
in Theorem \ref{FourierMi} and study the analytic property, particularly the singularity
of $F_{0,\nu,\lambda}^{(m)}(z,s)$ and $b_{0,\nu,\lambda}^{(m)}(*,y,s)$.
For this purpose, we use the results introduced in $\S$ \ref{Pre} and consider
them dividing into several cases.
\vspace{2mm}
\\
$1^\circ$\quad $(\nu,\lambda)=(0,0)$:
$$
F_{0,0,0}^{(m)}(z,s)=\text{det}(y)^s.
$$
$2^\circ$\quad $(\nu,\lambda)=(\nu,0)$,\;$(0<\nu<m)$:
\begin{align*}
F_{0,\nu,0}^{(m)}(z,s) &= c_{\nu,0}(s)\Gamma_\nu(s)^{-2}\zeta(2s-\nu)\zeta(2s)^{-1}\\
                           &\quad\cdot \prod_{j=1}^\nu\zeta(4s-2j)^{-1}\,\xi_\nu^{(m)}(2y,2s-\kappa(\nu)).
\end{align*}
$3^\circ$\quad $(\nu,\lambda)=(m,0)$:
\begin{align*}
F_{0,m,0}^{(m)}(z,s) &= c_{m,0}(s)\Gamma_m(2s-\kappa(m))\Gamma_m(s)^{-2}
\zeta(2s-m)\zeta(2s)^{-1}\\
                           &\quad\cdot \prod_{j=1}^m(\zeta(4s-m-2j)\zeta(4s-2j)^{-1}).
\end{align*}
$4^\circ$\quad $(\nu,\lambda)=(\nu,\nu)$,\;$(0<\nu\leqq m)$:
\begin{align*}
b_{0,\nu,\nu}^{(m)}(*,y,s) &= c_{\nu,\nu}(s)\Gamma_\nu(s)^{-2}\zeta(2s)^{-1}
                                    \prod_{j=1}^{[\nu/2]}\zeta(4s-2j)^{-1}
                                    \cdot \eta_\nu(*,*;s,s).
\end{align*}
$5^\circ$\quad $(\nu,\lambda)$,\;$(0<\lambda<\nu<m)$:
\begin{align*}
b_{0,\nu,\lambda}^{(m)}(*,y,s) &= c_{\nu,\lambda}(s)\Gamma_\nu(s)^{-2}\zeta(2s)^{-1}
                                           \prod_{j=1}^{\nu-\lambda}\zeta(4s-2j)^{-1}\\
                                 & \cdot \prod_{j=1}^{[\lambda/2]}\zeta(4s-2\nu+2\lambda-2j)^{-1}\\
                                   &\cdot  \eta_\lambda(*,*;s+(\lambda-\nu)/2,s+(\lambda-\nu)/2)
                                    \cdot \xi_{\nu-\lambda}^{(m-\lambda)}(*,2s-\kappa(\nu)).
\end{align*}
$6^\circ$\quad $(\nu,\lambda)=(m,\lambda)$,\;$(0<\lambda<m)$:
\begin{align*}
b_{0,m,\lambda}^{(m)}(*,y,s) &= c_{m,\lambda}(s)\Gamma_{m-\lambda}(2s-\kappa(m))\Gamma_\nu(s)^{-2}\\
                 &\cdot \prod_{j=1}^{m-\lambda}\zeta(4s-m-j)\zeta(4s-2j)^{-1}
                    \prod_{j=1}^{[\lambda/2]}\zeta(4s-2m+2\lambda-2j)^{-1}\\
                 &\cdot \eta_{\lambda}(*,*;s+(\lambda-m)/2.s+(\lambda-m)/2).
\end{align*}
where $c_{\nu,\lambda}(s)$ are holomorphic function in $s$.
\subsection{Analytic property of Fourier coefficients}
\label{sec:4-2}
We investigate the analytic property of $F_{0,\nu,\lambda}^{(m)}(z,s)$ and $b_{0,\nu,\lambda}^{(m)}(*,y,s)$
at $s=m/2$, based on the description in $\S$ \ref{sec:4-1}.
\begin{Prop}
\label{Triang}
{\it Functions $F_{0,\nu,\lambda}^{(m)}(z,s)$ and $b_{0,\nu,\lambda}^{(m)}(*,y,s)$ are holomorphic in $s$ at
$s=m/2$, except for the following three cases:
$$
{\rm (i)}\; \nu=m-1,\;\lambda=0\qquad
{\rm (ii)}\; \nu=m,\;\lambda=0\qquad
{\rm (iii)}\; \nu=m, \;\lambda=m-1.
$$
}
\end{Prop}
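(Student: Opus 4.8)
The plan is to prove Proposition~\ref{Triang} by a finite case analysis over the six explicit shapes of $F^{(m)}_{0,\nu,\lambda}(z,s)$ and $b^{(m)}_{0,\nu,\lambda}(\ast,y,s)$ recorded in $\S$\,\ref{sec:4-1}. For a fixed $(\nu,\lambda)$ each of these is a product of: a factor $c_{\nu,\lambda}(s)$ holomorphic by construction; a reciprocal Siegel gamma factor $\Gamma_\nu(s)^{-2}$; possibly a Siegel gamma factor $\Gamma_{\nu-\lambda}(2s-\kappa(\nu))$; a finite product of values $\zeta(\cdots)$ and $\zeta(\cdots)^{-1}$; for $\lambda<\nu<m$, a completed Koecher--Maass zeta $\xi^{(m-\lambda)}_{\nu-\lambda}(\ast,2s-\kappa(\nu))$; and, when $\lambda\ge 1$, a confluent hypergeometric factor $\eta_\lambda$. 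I would locate the singularities of each factor at $s=m/2$ and show that, outside the three listed pairs, every factor is holomorphic there.

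The analysis goes factor by factor. The reciprocal Siegel gammas $\Gamma_\nu(s)^{-1}$, hence $\Gamma_\nu(s)^{-2}$, are entire, so they contribute only zeros. Evaluated at $s=m/2$, the factor $\Gamma_{\nu-\lambda}(2s-\kappa(\nu))$ is, up to a power of $\pi$, the product $\prod_{i=0}^{\nu-\lambda-1}\Gamma\bigl(\tfrac{2m-\nu-1-i}{2}\bigr)$; its $i$-th factor is singular exactly when $2m-\nu-1-i$ is a non-positive even integer, which, given $0\le i\le\nu-\lambda-1$ and $\lambda\le\nu\le m$, happens only for $(\nu,\lambda)=(m,0)$ with $i=m-1$ --- case (ii). For the $\zeta$-part, note that at $s=m/2$ every argument appearing is a non-negative integer, so no reciprocal $\zeta$-factor is singular there (the zeros of $\zeta$ are at negative even integers, and $\zeta(0)=-\tfrac12\ne 0$), while a factor $\zeta(\cdots)$ is singular precisely when its argument is $1$; running through the six shapes --- and through the reduction $(\ref{Siegelreduce})$ of the Siegel series in the $\nu=m$ shapes --- shows that a surviving $\zeta(1)$ occurs only in the three exceptional pairs. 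For the Koecher--Maass factor I would apply Arakawa's Proposition~\ref{Ara}: all of its poles lie at argument $\le\tfrac{m-\lambda}{2}$, whereas the value needed here is $m-\kappa(\nu)=\tfrac{2m-\nu-1}{2}$, which exceeds $\tfrac{m-\lambda}{2}$ unless $m\le\nu-\lambda+1$; under $\lambda\le\nu\le m$ this leaves only $\lambda=0$ with $\nu\in\{m-1,m\}$, so away from cases (i) and (ii) this factor is holomorphic at $s=m/2$. Finally, for $\eta_\lambda$ I would invoke Shimura's Theorem~\ref{ShimuraMain}: rewriting $\eta_\lambda$ via $(\ref{omegaeta})$ in terms of the everywhere-holomorphic $\omega_\lambda$, and using that the matrix $h\in\Lambda_\lambda^{(\lambda)}$ inside $\eta_\lambda$ is non-singular (so it has no zero eigenvalues and the corresponding $\Gamma_0$-factor in $(\ref{omegaeta})$ is trivial), the only possible poles of $\eta_\lambda$ come from the factors $\Gamma_p$ and $\Gamma_q$ with $p+q=\lambda$; but at $s=m/2$ their arguments equal $\tfrac{m-\nu+p}{2}$ and $\tfrac{m-\nu+q}{2}$, which are respectively $>\tfrac{p-1}{2}$ and $>\tfrac{q-1}{2}$ (using $m\ge\nu$), hence to the right of all poles of $\Gamma_p$ and $\Gamma_q$; so $\eta_\lambda$ is holomorphic at $s=m/2$ in every case.

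Putting the four observations together, in every pair $(\nu,\lambda)$ other than the three listed ones all factors are holomorphic at $s=m/2$, which proves the proposition. One should also check that these three pairs really are singular --- this is what makes the residue in $\S$\,\ref{Main} non-zero: in case (i) the factors $\zeta(2s-m+1)$ and $\xi^{(m)}_{m-1}(2y,2s-\kappa(m-1))$ are singular at $s=m/2$ while the zeros of $\Gamma_{m-1}(s)^{-2}$ miss $s=m/2$; in case (ii) the $\Gamma(2s-m)$-factor of $\Gamma_m(2s-\kappa(m))$, and a $\zeta(1)$ in $\prod_j\zeta(4s-m-j)$, are singular there; and in case (iii) the surviving singularity comes, through the reduction $(\ref{Siegelreduce})$ of the Siegel series in the $\nu=m$ shape, from a $\zeta(1)$ (together, for larger $m$, with a principal Dirichlet $L$-value when $d(h)$ is a square), uncancelled because the zeros of $\Gamma_{m-\lambda}(s)^{-2}$ and of the reciprocal $\zeta$'s miss $s=m/2$.

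The step I expect to be the main obstacle is keeping all of these competing zeros and poles straight in case (iii): there one must carry the reduction $(\ref{Siegelreduce})$ through carefully --- the surviving factor $S_\lambda$ contributes a partially-cancelling zero and an $L$-factor that is not entire when $d(h)$ is a square --- and verify that nothing else in the $\nu=m$ shapes is singular at $s=m/2$. Once this bookkeeping is done the proof is routine; its only non-elementary inputs are Proposition~\ref{Ara} and Theorem~\ref{ShimuraMain}.
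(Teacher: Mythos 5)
Your overall strategy is the paper's own: run through the six shapes of $\S$\,\ref{sec:4-1} factor by factor, dispose of $\Gamma_\nu(s)^{-2}$, the reciprocal $\zeta$'s, the Koecher--Maass factor via Proposition~\ref{Ara}, and $\eta_\lambda$ via Theorem~\ref{ShimuraMain} and $(\ref{omegaeta})$ with $r=0$. Your treatment of cases $1^\circ$--$5^\circ$ and of the exceptional pairs (i) and (ii) is sound and matches the paper. The gap is in case $6^\circ$, i.e.\ in your claim that ``a surviving $\zeta(1)$ occurs only in the three exceptional pairs.'' Apply your own criterion to the shape $6^\circ$: the numerator product is $\prod_{j=1}^{m-\lambda}\zeta(4s-m-j)$, whose $j$-th factor equals $\zeta(1)$ at $s=m/2$ exactly when $j=m-1$, and $j=m-1$ lies in the range $1\le j\le m-\lambda$ precisely when $\lambda\le 1$. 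So the singular member of case $6^\circ$ is $b^{(m)}_{0,m,1}$, not $b^{(m)}_{0,m,m-1}$ (these coincide only for $m=2$). Your attempt to certify $(\nu,\lambda)=(m,m-1)$ as singular fails for $m\ge 4$: after the reduction $(\ref{Siegelreduce})$ the $\zeta$-arguments at $s=m/2$ are $m-1\ge 3$, and the $L$-value you invoke sits at argument $(m-1)/2>1$, so every factor is finite there. Thus your argument, as written, neither establishes the list $\{(m-1,0),(m,0),(m,m-1)\}$ nor the list your own computation actually produces, $\{(m-1,0),(m,0),(m,1)\}$. (The rest of the paper --- the boldfaced entry $F^{(m)}_{0,m,1}$ in the table of $\S$\,\ref{sec:4-2}, the computation in $\S$\,\ref{sec:4-4}, and the rank-one support $\Lambda_m^{(1)}$ in Theorem~\ref{Conclusion} --- shows the intended third case is $(\nu,\lambda)=(m,1)$; the ``$\lambda=m-1$'' in the statement and in the paper's proof is an internal inconsistency you should have detected and corrected rather than tried to justify.)

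A second, subtler point: you correctly sense that the Siegel series contributes an $L$-factor that is not entire, but you attach it to the wrong pair and never neutralize it where it actually bites. For $\nu=m$ the reduction $(\ref{Siegelreduce})$ leaves $S_\lambda(h,2s-m+\lambda)$, whose factor $L\bigl(2s-m+\lambda/2,\bigl(\tfrac{d(h)}{*}\bigr)\bigr)$ (present for $\lambda$ even) has argument $\lambda/2$ at $s=m/2$; this equals the pole $1$ of a principal $L$-function exactly when $\lambda=2$ and $d(h)$ is a perfect square. So for $m\ge 3$ the pair $(\nu,\lambda)=(m,2)$ is a potential fourth exception that your factor list (and the paper's blanket assertion that $c_{\nu,\lambda}(s)$ is holomorphic) does not rule out; a complete proof must either show this contribution is cancelled (e.g.\ by a vanishing $\prod_p a_p$ or of the $\eta_2^*$-factor) or account for it separately. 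Until the $\lambda=1$ versus $\lambda=m-1$ confusion is resolved and the $\lambda=2$, square-discriminant terms are disposed of, the case analysis is not closed.
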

\begin{proof}
We use the expressions $1^\circ-6^\circ$ given in the previous section.

The holomorphy for the case $1^\circ$ is
trivial.

First, we consider the case $5^\circ$. The $\Gamma$-factor $\Gamma_\nu(s)^{-2}$,
and $\zeta$-factors $\zeta(2s)^{-1},\cdots$ are all holomorphic at $s=m/2$.
From Theorem \ref{ShimuraMain}, the holomorphy of 
$\eta_\lambda(*,*;s+(\lambda-\nu)/2,s+(\lambda-\nu)/2)$ is reduced to that of
$$
\Gamma_p(s+(\lambda-\nu)/2-q/2)\quad
\Gamma_q(s+(\lambda-\nu)/2-p/2)\qquad (p+q=\lambda),
$$
and are both holomorphic at $s=m/2$. (Note that the factor $\Gamma_r(*)^{-1}$
does not appear in $\eta_\lambda$.)

Function $\xi_{\nu-\lambda}^{(m-\lambda)}(*,2s-\kappa(\nu))$ is holomorphic at $s=m/2$
because $\text{Re}(2s-\kappa(\nu))>(m-\lambda)/2$.
Consequently, the functions in the case $5^\circ$ are holomorphic at $s=m/2$.

By a similar argument, we observe that the functions in the case of $4^\circ$ are holomorphic at $s=m/2$.
The cases we must consider are the cases of $2^\circ$ and $6^\circ$.

In the case of $2^\circ$, only $F_{0,m-1,0}^{(m)}(z,s)$ and $F_{0,m,0}^{(m)}(z,s)$ are non-holomorphic
at $s=m/2$ because $F_{0,m-1,0}^{(m)}(z,s)$ has a factor $\zeta(2s-m+1)\xi_{m-1}^{(m)}(*,2s-m/2)$,
and $F_{0,m,0}^{(m)}(z,s)$ have the factors $\Gamma(2s-m)\zeta(4s-2m+1)$, respectively. In fact,
the factors above have double poles at $s=m/2$.

In the case $6^\circ$, only the function $b_{0,m,m-1}^{(m)}(*,y,s)$ is nonholomorphic at $s=m/2$,
because it contains the factor $\zeta(4s-2m+1)$, which has a simple pole at $s=m/2$.

These facts complete the proof.
\end{proof}
\begin{Rem}
The explicit formulas for $F_{0,m-1,0}^{(m)}(z,s)$, $F_{0,m,0}^{(m)}(z,s)$, \\
and $F_{0,m,1}^{(m)}(z,s)$ will be given in the next sections.
\end{Rem}
Here, we arrange functions $F_{0,\nu,\lambda}^{(m)}$ as follows:

{\small
\begin{center}
\begin{tabular}{llllll}
$F_{0,0,0}^{(m)}$  &    {}                    &     {}         &    {}    &     {}    &    {}       
\vspace{2mm}
\\
$F_{0,1,0}^{(m)}$  & $F_{0,1,1}^{(m)}$ &    {}          &   {}     &     {}    &    {}      
\vspace{2mm}
\\
$\vdots$          &  $\vdots$         &    $\ddots$ &  {}    &     {}     &   {},        
\vspace{2mm}
\\
$F_{0,m-2,0}^{(m)}$ &$F_{0,m-2,1}^{(m)}$&  $\ldots$ & $\ddots$   &    {}     &    {}       
\vspace{2mm}
\\
$\boldsymbol{F_{0,m-1,0}^{(m)}}$ &$F_{0,m-1,1}^{(m)}$&$F_{0,m-1,2}^{(m)}$&$ \ldots$&$F_{0,m-1,m-1}^{(m)}$ &{} 
\vspace{2mm}
\\
$\boldsymbol{F_{0,m,0}^{(m)}}$ &$\boldsymbol{F_{0,m,1}^{(m)}}$&$F_{0,m,2}^{(m)}$& $\ldots$&$F_{0,m,m-1}^{(m)}$ &$F_{0,m,m}^{(m)}$

\end{tabular}
\end{center}
}
The proposition asserts that only functions $F_{0,\nu,\lambda}^{(m)}$ printed in bold are non-holomorphic
at $s=m/2$.
\subsection{Residue of the constant term}
\label{sec:4-3}
We investigate the analytic property of the constant term
$$
\sum_{\nu=0}^mF_{0,\nu,0}^{(m)}(z,s)
$$
at $s=m/2$. More specifically, we show that the constant term has a simple pole at $s=m/2$
and calculate the residue.

By Proposition \ref{Triang}, it is sufficient to investigate only $F_{0,m-1,0}^{(m)}(z,s)$ and\\
$F_{0,m,0}^{(m)}(z,s)$ as far as considering the residue.
\vspace{2mm}
\\
\textbf{Analysis of} $\boldsymbol{F_{0,m-1,0}^{(m)}(z,s)}:$
\vspace{2mm}
\\
From the definition of $F_{0,\nu,0}^{(m)}(z,s)$ (see (\ref{F0})), we have
\begin{align}
\label{Fm-1}
 F_{0,m-1,0}^{(m)}(z,s) 
 & =2^{m-1}\pi^{2(m-1)s}\text{det}(y)^s\Gamma_{m-1}(s)^{-2}\zeta(2s-m+1)\zeta(2s)^{-2}
         \nonumber \\
&\quad \cdot\prod_{j=1}^{m-1}\zeta(4s-2j)^{-1}\cdot\xi_{m-1}^{(m)}(2y,2s-m/2). 
\end{align}
(We rewrote (\ref{F0}) with the complete Koecher--Maass zeta function $\xi_{m-1}^{(m)}$.)

We separate $F_{0,m-1,0}^{(m)}(z,s)$ into holomorphic and non-holomorphic parts.
We define the function $\alpha_m(y,s)$ by
$$
F_{0,m-1,0}^{(m)}(z,s)=\zeta(2s-m+1)\xi_{m-1}^{(m)}(2y,2s-m/2)\cdot \alpha_m(y,s).
$$
Explicitly,
\begin{equation}
\label{alpham}
\alpha_m(y,s):=2^{m-1}\pi^{2(m-1)s}\text{det}(y)^s\Gamma_{m-1}(s)^{-2}\zeta(2s)^{-1}
                  \prod_{j=1}^{m-1}\zeta(4s-2j)^{-1}.
\end{equation}
Functions $\zeta(2s-m+1)$ and $\xi_{m-1}^{(m)}(2y,2s-m/2)$ have a simple pole at
$s=m/2$ (for $\xi_{m-1}^{(m)}$, see Proposition \ref{Ara}), and $\alpha_m(y,s)$
is holomorphic at $s=m/2$. These facts imply that $F_{0,m-1,0}^{(m)}(z,s)$ has a double pole at
$s=m/2$.

We set
$$
F_{0,m-1,0}^{(m)}(z,s)=\sum_{l=-2}^\infty A_l^{(m)}(y)(s-m/2)^l
\quad (\text{Laurent expansion at}\; s=m/2)
$$
and calculate $A_{-2}^{(m)}(y)$ and $A_{-1}^{(m)}(y)$.

As a preparation, we investigate the analytic behavior of $\xi_{m-1}^{(m)}(2y,2s-m/2)$
at $s=m/2$. We consider the completed Koecher--Maass zeta function $\xi_{m-1}^{(m)}(2y,s)$.
According to Arakawa's result (Proposition \ref{Ara}), this function has a simple pole with
residue
\begin{equation}
\label{Resm-1}
\underset{s=m/2}{\text{Res}}\xi_{m-1}^{(m)}(2y,s)=\frac{1}{2}v(m-1)\text{det}(2y)^{-(m-1)/2}
\end{equation}
\begin{Def}
\label{DefC}
Define a constant $C_{m-1}^{(m)}(y)$ by
$$
C_{m-1}^{(m)}(y):=
\lim_{s\to m/2}\left(\xi_{m-1}^{(m)}(2y,s)-\underset{s=m/2}{\text{Res}}\xi_{m-1}^{(m)}(2y,s)(s-m/2)^{-1}\right).
$$
\end{Def}
That is, $C_{m-1}^{(m)}(y)$ is the constant term of the Laurent expansion of $\xi_{m-1}^{(m)}(2y,s)$
at $s=m/2$.
\begin{Rem}
(1)\; It should be noted that the constant $C_{m-1}^{(m)}(y)$ is defined from $\xi_{m-1}^{(m)}(2y,s)$
not $\xi_{m-1}^{(m)}(y,s)$, and the constant term of $\xi_{m-1}^{(m)}(2y,2s-m/2)$ at $s=m/2$
is equal to that of $\xi_{m-1}^{(m)}(2y,s)$.
\\
(2)\; In the case $m=2$, the constant $C_1^{(2)}(y)$ is explicitly expressed by the Kronecker
limit formula (see $\S$ \ref{Degree2}).
\end{Rem}
\begin{Prop} 
{\it Explicit forms of $A_{-2}(y)$ and $A_{-1}(y)$ are given as follows:
\begin{align}
A_{-2}^{(m)}(y) &= \frac{1}{8}\,v(m-1){\rm det}(2y)^{-(m-1)/2}\alpha_m(y,m/2),
\label{EA-2}
\\
A_{-1}^{(m)}(y) 
&=\alpha_m(y,m/2)\left(\frac{1}{2}\,C_{m-1}^{(m)}(y)+\frac{\gamma}{4}\,v(m-1){\rm det}(2y)^{-(m-1)/2}\right)
        \nonumber \\
&\qquad +\frac{1}{8}\,v(m-1){\rm det}(2y)^{-(m-1)/2}\cdot \alpha'_m(y,m/2),
\label{EA-1}
\end{align}
where $\gamma$ is the Euler constant and 
$\alpha'_m(y,m/2)=\left.\frac{d}{ds}\alpha_m(y,s)\right|_{s=m/2}$.}
\end{Prop}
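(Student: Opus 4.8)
The plan is to extract the coefficients $A_{-2}^{(m)}(y)$ and $A_{-1}^{(m)}(y)$ from the factorization
$$
F_{0,m-1,0}^{(m)}(z,s)=\zeta(2s-m+1)\,\xi_{m-1}^{(m)}(2y,2s-m/2)\cdot\alpha_m(y,s),
$$
by multiplying together the known Laurent expansions of the three factors at $s=m/2$. First I would record the Laurent expansion of $\zeta(2s-m+1)$ at $s=m/2$: since $\zeta$ has a simple pole at $1$ with residue $1$ and constant term $\gamma$, and $2s-m+1=1+2(s-m/2)$, one gets
$$
\zeta(2s-m+1)=\tfrac12\,(s-m/2)^{-1}+\gamma+O(s-m/2).
$$
Next, by Definition~\ref{DefC} and \eqref{Resm-1}, the expansion of the Koecher--Maass factor is
$$
\xi_{m-1}^{(m)}(2y,2s-m/2)=\tfrac12 v(m-1)\mathrm{det}(2y)^{-(m-1)/2}\,(s-m/2)^{-1}+C_{m-1}^{(m)}(y)+O(s-m/2),
$$
using Remark (1) after Definition~\ref{DefC} to see that replacing the argument $s$ by $2s-m/2$ does not change the residue or constant term (the substitution $s\mapsto 2s-m/2$ is affine with slope $2$, which scales the residue by $1/2$ but the residue in the variable $s$ of the composite is what one wants — this bookkeeping needs to be stated carefully). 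Finally $\alpha_m(y,s)$ is holomorphic at $s=m/2$, so $\alpha_m(y,s)=\alpha_m(y,m/2)+\alpha'_m(y,m/2)(s-m/2)+O((s-m/2)^2)$.

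Then I would simply multiply the three series and collect the coefficients of $(s-m/2)^{-2}$ and $(s-m/2)^{-1}$. The $(s-m/2)^{-2}$ coefficient comes only from the product of the two polar terms times $\alpha_m(y,m/2)$:
$$
A_{-2}^{(m)}(y)=\tfrac12\cdot\tfrac12 v(m-1)\mathrm{det}(2y)^{-(m-1)/2}\cdot\alpha_m(y,m/2)=\tfrac18 v(m-1)\mathrm{det}(2y)^{-(m-1)/2}\alpha_m(y,m/2),
$$
which is \eqref{EA-2}. For $A_{-1}^{(m)}(y)$ there are three contributions: (pole of $\zeta$)$\times$(constant of $\xi$)$\times\alpha_m(y,m/2)$ gives $\tfrac12 C_{m-1}^{(m)}(y)\alpha_m(y,m/2)$; (constant $\gamma$ of $\zeta$)$\times$(pole of $\xi$)$\times\alpha_m(y,m/2)$ gives $\tfrac\gamma4 v(m-1)\mathrm{det}(2y)^{-(m-1)/2}\alpha_m(y,m/2)$; and (pole of $\zeta$)$\times$(pole of $\xi$)$\times\alpha'_m(y,m/2)(s-m/2)$ gives $\tfrac18 v(m-1)\mathrm{det}(2y)^{-(m-1)/2}\alpha'_m(y,m/2)$. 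Summing these yields \eqref{EA-1}.

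The only real subtlety — and the step I expect to be the main obstacle — is the change-of-variable bookkeeping for $\xi_{m-1}^{(m)}(2y,2s-m/2)$: one must be careful that the residue and constant term quoted in Remark (1) after Definition~\ref{DefC} are stated with respect to the variable $s$ appearing in $F_{0,m-1,0}^{(m)}(z,s)$, i.e.\ after the affine substitution $u=2s-m/2$ (so $u=m/2 \iff s=m/2$, $du=2\,ds$). Because the substitution is linear with slope $2$, the residue in $u$ at $u=m/2$, which is $\tfrac12 v(m-1)\mathrm{det}(2y)^{-(m-1)/2}$ by \eqref{Resm-1}, must be \emph{multiplied by} the derivative $du/ds=2$ to obtain the residue in $s$ — but Remark (1) is asserting that the final normalization is arranged so that the constant term is unchanged and the residue in $s$ is exactly $\tfrac12 v(m-1)\mathrm{det}(2y)^{-(m-1)/2}$; I would double-check this against the stated residue \eqref{Resm-1} and the overall main theorem to fix the constant unambiguously. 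Once that is pinned down, the rest is the routine multiplication of three truncated Laurent series described above.
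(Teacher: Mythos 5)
Your approach is exactly the paper's: the proof there consists of writing down the Laurent expansion of $\zeta(2s-m+1)\,\xi_{m-1}^{(m)}(2y,2s-m/2)$ at $s=m/2$ (leading term $\tfrac18 v(m-1)\det(2y)^{-(m-1)/2}(s-m/2)^{-2}$, next term $\bigl(\tfrac12 C_{m-1}^{(m)}(y)+\tfrac{\gamma}{4}v(m-1)\det(2y)^{-(m-1)/2}\bigr)(s-m/2)^{-1}$) and multiplying by the Taylor expansion of $\alpha_m(y,s)$. So the structure of your argument is fine, and your final formulas agree with the Proposition.

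However, the one step you flag as the subtlety is resolved in the wrong direction, and your intermediate expansion is inconsistent with your own final answer. Writing $u=2s-m/2$, we have $u-m/2=2(s-m/2)$, so a simple pole $R/(u-m/2)$ becomes $\tfrac{R}{2}(s-m/2)^{-1}$: the residue in $s$ is the residue in $u$ \emph{divided} by $du/ds=2$, not multiplied by it. Hence the correct expansion is
$$
\xi_{m-1}^{(m)}(2y,2s-m/2)=\tfrac14\,v(m-1)\det(2y)^{-(m-1)/2}(s-m/2)^{-1}+C_{m-1}^{(m)}(y)+O(s-m/2),
$$
whereas you wrote $\tfrac12 v(m-1)\det(2y)^{-(m-1)/2}$ for the polar coefficient. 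With your stated coefficient, the product of the two polar terms would be $\tfrac12\cdot\tfrac12=\tfrac14$, not the $\tfrac18$ you then assert, and the $\gamma$-term would come out as $\tfrac{\gamma}{2}$ rather than $\tfrac{\gamma}{4}$; so your arithmetic only lands on \eqref{EA-2} and \eqref{EA-1} because you silently used the corrected value $\tfrac14 v(m-1)\det(2y)^{-(m-1)/2}$. Also note that Remark (1) after Definition \ref{DefC} asserts only that the \emph{constant term} survives the substitution $s\mapsto 2s-m/2$ unchanged; it makes no claim about the residue, precisely because the residue does change by the factor $\tfrac12$. Once this is corrected, the three-way multiplication of truncated Laurent series you describe gives \eqref{EA-2} and \eqref{EA-1} exactly as in the paper.
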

\begin{proof}
The formulas are derived from the expression
\begin{align*}
\zeta(2s-m+1)&\xi_{m-1}^{(m)}(2y,2s-m/2) \\
                  &= \frac{1}{8}\,v(m-1)\text{det}(2y)^{-(m-1)/2}(s-m/2)^{-2}\\
             &+\left(\frac{1}{2}\,C_{m-1}^{(m)}(y)+\frac{\gamma}{4}\,v(m-1)\text{det}(2y)^{-(m-1)/2} \right)(s-m/2)^{-1}\\
             &+(\text{a holomorphic function at}\; s=m/2). 
\end{align*}
\end{proof}
\noindent
\textbf{Analysis of} $\boldsymbol{F_{0,m,0}^{(m)}(z,s)}:$
\vspace{2mm}
\\
By definition (\ref{Fb}),
\begin{align}
\label{Fm}
 F_{0,m,0}^{(m)}(z,s) 
 & =2^{-2ms+m(m+3)/2}\pi^{m(m+1)/2}\text{det}(y)^{-s+(m+1)/2}
\Gamma_m(2s-\kappa(m)) \nonumber \\
& \cdot \Gamma_{m}(s)^{-2}\zeta(2s-m)\zeta(2s)^{-1}
         \cdot\prod_{j=1}^{m}(\zeta(4s-m-j)\zeta(4s-2j)^{-1}).
\end{align}
Similar to that in case $F_{0,m,-1,0}^{(m)}(z,s)$, we define the function $\beta_m(y,s)$ as
$\alpha_m(y,s)$:
$$
F_{0,m,0}^{(m)}(z,s)=\Gamma(2s-m)\zeta(4s-2m+1)\,\beta_m(y,s).
$$
Explicitly,
\begin{align}
\label{betam}
\beta_m(y,s):&=2^{-2ms+m(m+3)/2}\pi^{(m^2+2m-1)/2}\text{det}(y)^{-s+(m+1)/2}\nonumber \\
                            & \cdot \Gamma_{m-1}(2s-\kappa(m))\Gamma_{m}(s)^{-2}\zeta(2s)^{-1} \nonumber \\
                            &\cdot \prod_{j=1}^{m-2}\zeta(4s-m-j)\cdot\prod_{j=1}^{m-1}\zeta(4s-2j)^{-1}.
\end{align}
Functions $\Gamma(2s-m)$ and $\zeta(4s-2m+1)$ have a simple pole at $s=m/2$, respectively,
and $\beta_m(y,s)$ is holomorphic at $s=m/2$. Consequently, we observe that $F_{0,m,0}^{(m)}(z,s)$,
has a double pole at $s=m/2$, as in the previous case.

We set
$$
F_{0,m,0}^{(m)}(z,s)=\sum_{l=-2}^\infty B_l^{(m)}(y)(s-m/2)^l
$$
and calculate $B_{-2}^{(m)}(y)$ and $B_{-1}^{(m)}(y)$.
\begin{Prop} 
\label{B-2-1}
{\it The explicit forms of $B_{-2}^{(m)}(y)$ and $B_{-1}^{(m)}(y)$ are as follows:
\begin{align}
& B_{-2}^{(m)}(y) = \frac{1}{8}\,\beta_m(y,m/2),
\label{EB-2}
\\
& B_{-1}^{(m)}(y) 
=\frac{\gamma}{4}\,\beta_m(y,m/2)+\frac{1}{8}\,\beta'_m(y,m/2)
\label{EB-1}
\end{align}
where
$\beta'_m(y,m/2)=\left.\frac{d}{ds}\beta_m(y,s)\right|_{s=m/2}$.}
\end{Prop}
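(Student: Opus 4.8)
The plan is to reduce the computation of the Laurent coefficients $B_{-2}^{(m)}(y)$ and $B_{-1}^{(m)}(y)$ to the Laurent expansion of the scalar factor $\Gamma(2s-m)\zeta(4s-2m+1)$ at $s=m/2$, exactly as the previous proposition handled $F_{0,m-1,0}^{(m)}$. By the definition displayed just above, $F_{0,m,0}^{(m)}(z,s)=\Gamma(2s-m)\,\zeta(4s-2m+1)\,\beta_m(y,s)$, and $\beta_m(y,s)$ is holomorphic at $s=m/2$; so all the analytic content sits in the first two factors. First I would record the principal parts of the two elementary factors at $s=m/2$: writing $t=s-m/2$, one has $\Gamma(2s-m)=\Gamma(2t)=\frac{1}{2t}-\gamma+O(t)$ (from $\Gamma(w)=w^{-1}-\gamma+O(w)$ with $w=2t$), and $\zeta(4s-2m+1)=\zeta(1+4t)=\frac{1}{4t}+\gamma+O(t)$ (from $\zeta(1+w)=w^{-1}+\gamma+O(w)$ with $w=4t$).

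Next I would multiply these two expansions together. The product of the two singular factors is
\begin{align*}
\Gamma(2t)\,\zeta(1+4t)
&=\Bigl(\tfrac{1}{2t}-\gamma+O(t)\Bigr)\Bigl(\tfrac{1}{4t}+\gamma+O(t)\Bigr)\\
&=\frac{1}{8t^2}+\frac{1}{2t}\cdot\gamma-\gamma\cdot\frac{1}{4t}+O(1)
=\frac{1}{8t^2}+\frac{\gamma}{4t}+O(1),
\end{align*}
where the $1/t$ coefficient is $\tfrac{\gamma}{2}-\tfrac{\gamma}{4}=\tfrac{\gamma}{4}$. Then I would expand the holomorphic factor as $\beta_m(y,s)=\beta_m(y,m/2)+\beta'_m(y,m/2)\,t+O(t^2)$ and multiply: the coefficient of $t^{-2}$ in $F_{0,m,0}^{(m)}(z,s)$ is $\tfrac{1}{8}\beta_m(y,m/2)$, giving $(\ref{EB-2})$, and the coefficient of $t^{-1}$ is $\tfrac{1}{8}\beta'_m(y,m/2)+\tfrac{\gamma}{4}\beta_m(y,m/2)$, giving $(\ref{EB-1})$.

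There is essentially no obstacle here beyond bookkeeping: the only point that needs a word of care is confirming that $\beta_m(y,s)$ really is holomorphic and nonvanishing in a neighbourhood of $s=m/2$, which follows from inspecting $(\ref{betam})$ — the $\Gamma$-factor $\Gamma_{m-1}(2s-\kappa(m))$, the factors $\zeta(2s)^{-1}$ and $\prod_{j=1}^{m-1}\zeta(4s-2j)^{-1}$ are all holomorphic at $s=m/2$, and $\prod_{j=1}^{m-2}\zeta(4s-m-j)$ is evaluated away from the pole of $\zeta$ since the arguments $4s-m-j=m-j$ range over $2,\dots,m-2$ for $1\le j\le m-2$ (recall $m\ge 2$, with the product empty when $m=2$). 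Hence the expansion above is legitimate, and comparing coefficients of $t^{-2}$ and $t^{-1}$ finishes the proof.

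\begin{proof}
Write $t=s-m/2$. From the Laurent expansions $\Gamma(w)=w^{-1}-\gamma+O(w)$ and $\zeta(1+w)=w^{-1}+\gamma+O(w)$ at $w=0$, applied with $w=2t$ and $w=4t$ respectively, we obtain
$$
\Gamma(2s-m)=\frac{1}{2t}-\gamma+O(t),\qquad
\zeta(4s-2m+1)=\frac{1}{4t}+\gamma+O(t).
$$
Multiplying,
$$
\Gamma(2s-m)\,\zeta(4s-2m+1)=\frac{1}{8t^2}+\frac{\gamma}{4t}+(\text{a holomorphic function at }s=m/2),
$$
since the coefficient of $t^{-1}$ equals $\tfrac12\gamma-\tfrac14\gamma=\tfrac14\gamma$. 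By inspection of $(\ref{betam})$, the function $\beta_m(y,s)$ is holomorphic at $s=m/2$: the factors $\Gamma_{m-1}(2s-\kappa(m))$, $\Gamma_m(s)^{-2}$, $\zeta(2s)^{-1}$, and $\prod_{j=1}^{m-1}\zeta(4s-2j)^{-1}$ are all holomorphic there, and the arguments of the factors in $\prod_{j=1}^{m-2}\zeta(4s-m-j)$ equal $m-j$ for $1\le j\le m-2$, which avoids $1$. Hence
$$
\beta_m(y,s)=\beta_m(y,m/2)+\beta'_m(y,m/2)\,t+O(t^2).
$$
Since $F_{0,m,0}^{(m)}(z,s)=\Gamma(2s-m)\,\zeta(4s-2m+1)\,\beta_m(y,s)$, multiplying the two displays above gives
$$
F_{0,m,0}^{(m)}(z,s)=\frac{1}{8}\beta_m(y,m/2)\,t^{-2}
+\left(\frac{\gamma}{4}\beta_m(y,m/2)+\frac{1}{8}\beta'_m(y,m/2)\right)t^{-1}+O(1).
$$
Comparing with $F_{0,m,0}^{(m)}(z,s)=\sum_{l\geqq -2}B_l^{(m)}(y)\,t^l$ yields $(\ref{EB-2})$ and $(\ref{EB-1})$.
\end{proof}
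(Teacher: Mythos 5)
Your proof is correct and follows essentially the same route as the paper: the paper's proof likewise rests on the Laurent expansion $\Gamma(2s-m)\,\zeta(4s-2m+1)=\tfrac{1}{8}(s-m/2)^{-2}+\tfrac{\gamma}{4}(s-m/2)^{-1}+O(1)$ and then multiplies by the Taylor expansion of $\beta_m(y,s)$. You merely supply the elementary details (the expansions of $\Gamma$ and $\zeta$ near their poles and the holomorphy check for $\beta_m$) that the paper leaves implicit.
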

\begin{proof}
Function $\Gamma(2s-m)\zeta(4s-2m+1)$ has the Laurent expansion as
\begin{align*}
 &\Gamma(2s-m)\zeta(4s-2m+1) \\
 & = \frac{1}{8}\,(s-m/2)^{-2}+\frac{\gamma}{4}\,(s-m/2)^{-1}+(\text{a holomorphic function at}\; s=m/2). 
\end{align*}
The formulas for $B_{-2}^{(m)}(y)$ and $B_{-1}^{(m)}(y)$ are obtained from this expression.
\end{proof}
An important point is the following relationship between $A_{-2}^{(m)}(y)$ and\\
 $B_{-2}^{(m)}(y)$.
\begin{Prop}
{\it The following identity holds.}
\begin{equation}
\label{A=-B}
A_{-2}^{(m)}(y)=-B_{-2}^{(m)}(y).
\end{equation}
\end{Prop}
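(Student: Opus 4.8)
The plan is to compare the two explicit formulas already in hand. By \eqref{EA-2} we have
$$
A_{-2}^{(m)}(y)=\tfrac18\,v(m-1)\,\mathrm{det}(2y)^{-(m-1)/2}\,\alpha_m(y,m/2),
$$
while by \eqref{EB-2} we have $B_{-2}^{(m)}(y)=\tfrac18\,\beta_m(y,m/2)$. Hence the claimed identity \eqref{A=-B} is equivalent to the single scalar identity
$$
v(m-1)\,\mathrm{det}(2y)^{-(m-1)/2}\,\alpha_m(y,m/2)=-\,\beta_m(y,m/2),
$$
and the whole proof reduces to evaluating $\alpha_m$ and $\beta_m$ at $s=m/2$ from their definitions \eqref{alpham} and \eqref{betam} and checking that the two sides agree.

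The key steps, in order, are as follows. First I would substitute $s=m/2$ into \eqref{alpham}: this gives powers of $2$ and $\pi$, a factor $\mathrm{det}(y)^{m/2}$, a factor $\Gamma_{m-1}(m/2)^{-2}$, and the zeta factors $\zeta(m)^{-1}\prod_{j=1}^{m-1}\zeta(2m-2j)^{-1}$. Multiplying by $v(m-1)\,\mathrm{det}(2y)^{-(m-1)/2}=v(m-1)\,2^{-m(m-1)/2}\mathrm{det}(y)^{-(m-1)/2}$ and recalling $v(m-1)=\prod_{i=2}^{m-1}\xi(i)$ with $\xi(i)=\pi^{-i/2}\Gamma(i/2)\zeta(i)$, I would collect all $\pi$-powers, $2$-powers, $\mathrm{det}(y)$-powers, gamma factors and zeta factors into one closed expression. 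Second, I would substitute $s=m/2$ into \eqref{betam}: here the power of $2$ is $2^{-m^2+m(m+3)/2}=2^{m(m+1)/2}$, the $\pi$-power is $\pi^{(m^2+2m-1)/2}$, the determinant factor is $\mathrm{det}(y)^{(1-m)/2}=\mathrm{det}(y)^{-(m-1)/2}$, and one gets $\Gamma_{m-1}(m-\kappa(m))^{-1}\cdot\Gamma_{m}(m/2)^{-2}$ wait—rather $\Gamma_{m-1}(2\cdot m/2-\kappa(m))=\Gamma_{m-1}((m-1)/2)$ times $\Gamma_m(m/2)^{-2}$, together with $\zeta(m)^{-1}\prod_{j=1}^{m-2}\zeta(m-j)\prod_{j=1}^{m-1}\zeta(2m-2j)^{-1}$. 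Third, I would compare: the $\mathrm{det}(y)$-powers match immediately ($-(m-1)/2$ on both sides), the zeta factors not involving the $\xi$'s in $v(m-1)$ should cancel against $\prod_{j=1}^{m-2}\zeta(m-j)$, and what remains is a pure identity among powers of $2$, powers of $\pi$, ordinary $\Gamma$-values and the $\Gamma_m$ symbols, with an overall sign $-1$ to be accounted for. The sign is the delicate point: it must come out of the $(-1)^{k\nu/2}$-type factors and the reflection-type relations hidden in $\Gamma_m$, since for $k=0$ there is no sign from that source, so the $-1$ in \eqref{A=-B} must be produced by the analytic continuation of $\zeta$ across the negative reals or by a functional equation, i.e. it is not visible termwise and needs an extra ingredient.

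The cleanest route around the sign issue, which I would actually pursue, is not to grind through \eqref{alpham} and \eqref{betam} directly but to use the structural relation between $F_{0,m-1,0}^{(m)}$ and $F_{0,m,0}^{(m)}$ coming from the functional equation of the Koecher--Maass zeta function. Indeed, $F_{0,m,0}^{(m)}$ is (up to holomorphic factors) $\Gamma(2s-m)\zeta(4s-2m+1)$ while $F_{0,m-1,0}^{(m)}$ is $\zeta(2s-m+1)\,\xi_{m-1}^{(m)}(2y,2s-m/2)$ times a holomorphic factor; Arakawa's functional equation for $\xi_{m-1}^{(m)}$ relates the residue of $\xi_{m-1}^{(m)}$ at $s=m/2$ to $\xi_0^{(m)}=1$ via \eqref{Res1}--\eqref{Res2}, and this is exactly the mechanism that ties the leading Laurent coefficient of one constant-term piece to that of the other with the opposite sign. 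Concretely, \eqref{Resm-1} already packages the residue of $\xi_{m-1}^{(m)}$, and the residue $\tfrac18$ of $\Gamma(2s-m)\zeta(4s-2m+1)$ is the analogous "$\xi_0^{(m)}=1$" contribution from the completed zeta $\xi(4s-2m+1)$; the functional equation $\xi(w)=\xi(1-w)$ applied at $w=4s-2m+1$ near $s=m/2$ converts the pole of $\zeta(2s-m+1)$ into the pole of $\zeta(4s-2m+1)$ with a relative minus sign, which is precisely \eqref{A=-B}.

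The main obstacle, therefore, is bookkeeping: one must line up the $2$-powers, $\pi$-powers and $\Gamma$-factors in $\alpha_m(y,m/2)\cdot v(m-1)\mathrm{det}(2y)^{-(m-1)/2}$ against those in $\beta_m(y,m/2)$ exactly, using the duplication formula for $\Gamma$ and the explicit shape of $\Gamma_m$, and then isolate the single remaining scalar identity whose sign is supplied by $\xi(w)=\xi(1-w)$. I expect no conceptual difficulty beyond this; the risk is purely an arithmetic slip in the exponents, so I would double-check the exponent of $2$ and of $\pi$ at the end by specializing to $m=2$, where $\alpha_2$, $\beta_2$, $v(1)=1$ and $\mathrm{det}(2y)^{-1/2}$ are all elementary and the identity \eqref{A=-B} can be verified by direct substitution, consistent with the Corollary's $E_0^{(2)}$ formula.
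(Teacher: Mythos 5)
Your first route --- evaluate $\alpha_m$ and $\beta_m$ at $s=m/2$ from \eqref{alpham} and \eqref{betam} and compare the resulting products of powers of $2$ and $\pi$, gamma factors and zeta values --- is exactly what the paper does: it records the two explicit expressions \eqref{ExplicitA-2} and \eqref{ExplicitB-2} and matches them via $\prod_{j=0}^{m-2}\Gamma((m-1-j)/2)=\Gamma(1/2)\prod_{i=2}^{m-1}\Gamma(i/2)$. The gap is in your treatment of the sign. You assert that the minus sign ``is not visible termwise and needs an extra ingredient,'' and you propose to supply it from the functional equation $\xi(w)=\xi(1-w)$ together with Arakawa's relations; but that paragraph is a heuristic, not an argument --- it never produces an identity between the two Laurent coefficients. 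In fact the sign \emph{is} visible termwise: the coefficient $F_{0,m,0}^{(m)}(z,s)$ in \eqref{Fm} carries the factor $\zeta(2s-m)$, which survives into $\beta_m(y,s)$ (the displayed formula \eqref{betam} drops it, but the defining relation $F_{0,m,0}^{(m)}=\Gamma(2s-m)\zeta(4s-2m+1)\,\beta_m$ together with \eqref{Fm} forces its presence), and at $s=m/2$ it equals $\zeta(0)=-\tfrac{1}{2}$. Every other factor of $\beta_m(y,m/2)$ and of $v(m-1)\det(2y)^{-(m-1)/2}\alpha_m(y,m/2)$ is positive, so this single special value is the entire source of the minus sign in \eqref{A=-B}; no functional equation is needed.

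Two further cautions on the bookkeeping you postpone: at $s=m/2$ the exponent of $2$ coming from \eqref{betam} is $-m^2+m(m+3)/2=m(3-m)/2$, not $m(m+1)/2$, and the exponent of $\det(y)$ is $-m/2+(m+1)/2=1/2$, not $-(m-1)/2$ (it matches the $A$-side because $\det(y)^{m/2}\cdot\det(y)^{-(m-1)/2}=\det(y)^{1/2}$ there as well, not because both exponents are $-(m-1)/2$). Your plan to specialize to $m=2$ at the end is sound and would have exposed both slips as well as the $\zeta(0)$ issue; carried out carefully, the direct comparison does close the proof along the paper's lines.
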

\begin{proof}
A direct calculation shows that
\begin{align}
A_{-2}^{(m)}(y) &= 2^{(-m^2+4m-8)/2}\pi^{(m^3+3m-2)/4}\text{det}(2y)^{1/2}\zeta(m)^{-1} \nonumber \\
        & \cdot \prod_{i=2}^{m-1}\Gamma(i/2)\prod_{j=0}^{m-2}\Gamma((m-j)/2)^{-2}\prod_{i=2}^{m-1}\zeta(i)
        \prod_{j=1}^{m-1}\zeta(2m-2j)^{-1}.\label{ExplicitA-2}
\end{align}
Meanwhile,
\begin{align}
B_{-2}^{(m)}(y) &= -2^{(-m^2+4m-8)/2}\pi^{(m^3+3m)/4}\text{det}(2y)^{1/2}\zeta(m)^{-1} \nonumber \\
        & \cdot \prod_{j=0}^{m-2}\Gamma((m-1-j)/2)\prod_{j=0}^{m-1}\Gamma((m-j)/2)^{-2}\nonumber \\
        & \cdot  \prod_{j=1}^{m-2}\zeta(m-j)  \prod_{j=1}^{m-1}\zeta(2m-2j)^{-1}.\label{ExplicitB-2}
\end{align}
Noting that
$$
\prod_{j=0}^{m-2}\Gamma((m-1-j)/2)=\Gamma(1/2)\cdot\prod_{i=2}^{m-1}\Gamma(i/2),
$$
we conclude that $A_{-2}^{(m)}(y)=-B_{-2}^{(m)}(y)$.
\end{proof}
From this proposition, we observe that the singularity of function
$$
F_{0,m-1,0}^{(m)}(z,s)+F_{0,m,0}^{(m)}(z,s)
$$
at $s=m/2$ is a simple pole.
\begin{Thm}
{\it The residue of the constant term is as follows: }
\begin{align}
& \underset{s=m/2}{{\rm Res}}\sum_{\nu=0}^{m}F_{0,\nu,0}^{(m)}(z,s)
  =\underset{s=m/2}{{\rm Res}}(F_{0,m-1,0}^{(m)}(z,s)+F_{0,m,0}^{(m)}(z,s)) \nonumber \\
&=\frac{1}{2}\,\alpha_m(y,m/2)\cdot C_{m-1}^{(m)}(y)
    +\frac{1}{8}\,v(m-1)\text{det}(2y)^{-(m-1)/2}\alpha'_m(y,m/2)\nonumber \\
   &\quad +\frac{1}{8\,}\beta'_m(y,m/2).
    \label{ResConst}
\end{align}
\end{Thm}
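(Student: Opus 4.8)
The plan is to combine the three preceding propositions mechanically. By Proposition \ref{Triang}, among all the $F_{0,\nu,\lambda}^{(m)}(z,s)$ only the constant-term pieces $F_{0,m-1,0}^{(m)}(z,s)$ and $F_{0,m,0}^{(m)}(z,s)$ fail to be holomorphic at $s=m/2$, so the residue of $\sum_{\nu=0}^m F_{0,\nu,0}^{(m)}(z,s)$ equals the residue of $F_{0,m-1,0}^{(m)}(z,s)+F_{0,m,0}^{(m)}(z,s)$. This is the first displayed equality in \eqref{ResConst}, and it needs no further work.

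Next I would write the Laurent expansions at $s=m/2$ coming from the two earlier propositions: $F_{0,m-1,0}^{(m)}(z,s)=A_{-2}^{(m)}(y)(s-m/2)^{-2}+A_{-1}^{(m)}(y)(s-m/2)^{-1}+\cdots$ with $A_{-2}^{(m)},A_{-1}^{(m)}$ as in \eqref{EA-2}, \eqref{EA-1}, and similarly $F_{0,m,0}^{(m)}(z,s)=B_{-2}^{(m)}(y)(s-m/2)^{-2}+B_{-1}^{(m)}(y)(s-m/2)^{-1}+\cdots$ with $B_{-2}^{(m)},B_{-1}^{(m)}$ as in \eqref{EB-2}, \eqref{EB-1}. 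Adding these and invoking \eqref{A=-B}, the two $(s-m/2)^{-2}$ terms cancel, so the sum has at worst a simple pole and
\[
\underset{s=m/2}{\rm Res}\bigl(F_{0,m-1,0}^{(m)}(z,s)+F_{0,m,0}^{(m)}(z,s)\bigr)=A_{-1}^{(m)}(y)+B_{-1}^{(m)}(y).
\]

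Finally I would substitute the explicit expressions for $A_{-1}^{(m)}(y)$ and $B_{-1}^{(m)}(y)$ and observe that the two occurrences of the Euler constant cancel: \eqref{EA-1} contributes $\tfrac{\gamma}{4}v(m-1)\det(2y)^{-(m-1)/2}\alpha_m(y,m/2)$, while \eqref{EB-1} contributes $\tfrac{\gamma}{4}\beta_m(y,m/2)$, and by \eqref{EA-2} together with \eqref{A=-B} we have $\tfrac18 v(m-1)\det(2y)^{-(m-1)/2}\alpha_m(y,m/2)=A_{-2}^{(m)}(y)=-B_{-2}^{(m)}(y)=-\tfrac18\beta_m(y,m/2)$, i.e. $v(m-1)\det(2y)^{-(m-1)/2}\alpha_m(y,m/2)=-\beta_m(y,m/2)$, so the $\gamma$-terms sum to zero. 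What remains is
\[
A_{-1}^{(m)}(y)+B_{-1}^{(m)}(y)=\tfrac12\alpha_m(y,m/2)C_{m-1}^{(m)}(y)+\tfrac18 v(m-1)\det(2y)^{-(m-1)/2}\alpha'_m(y,m/2)+\tfrac18\beta'_m(y,m/2),
\]
which is the right-hand side of \eqref{ResConst}. The only nontrivial point is the cancellation of the $\gamma$-terms, and this is precisely the content of \eqref{A=-B}; everything else is bookkeeping, so I expect no real obstacle, merely the need to record the identity $v(m-1)\det(2y)^{-(m-1)/2}\alpha_m(y,m/2)=-\beta_m(y,m/2)$ explicitly before collecting terms.
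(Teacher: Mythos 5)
Your proposal is correct and follows exactly the paper's own route: reduce to $F_{0,m-1,0}^{(m)}+F_{0,m,0}^{(m)}$ via Proposition \ref{Triang}, compute the residue as $A_{-1}^{(m)}(y)+B_{-1}^{(m)}(y)$ from \eqref{EA-1} and \eqref{EB-1}, and cancel the two $\gamma$-terms using \eqref{A=-B}. Your explicit remark that \eqref{A=-B} amounts to $v(m-1)\det(2y)^{-(m-1)/2}\alpha_m(y,m/2)=-\beta_m(y,m/2)$ is exactly the identity the paper invokes when it says the second and fifth terms sum to zero.
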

\begin{proof}
We have
\begin{align*}
& \underset{s=m/2}{{\rm Res}}(F_{0,m-1,0}^{(m)}(z,s)+F_{0,m,0}^{(m)}(z,s))
   =A_{-1}^{(m)}(y)+B_{-1}^{(m)}(y)\\
&=\frac{1}{2}\,\alpha_m(y,m/2)\cdot C_{m-1}^{(m)}(y)+\frac{\gamma}{4}\,v(m-1)\text{det}(2y)^{-(m-1)/2}\alpha_m(y,m/2)\\
& \quad +\frac{1}{8}\,v(m-1)\text{det}(2y)^{-(m-1)/2}\alpha'_m(y,m/2)+\frac{1}{8}\,\beta'_m(y,m/2)+
      \frac{\gamma}{4}\,\beta_m(y,m/2).
\end{align*}
By the identity (\ref{A=-B}), the sum of the second and fifth terms in the last formula is equal to zero.
This implies (\ref{ResConst}).
\end{proof}
\subsection{Calculation of $\boldsymbol{F_{0,m,1}^{(m)}(z,s)}$}
\label{sec:4-4}
We have one more non-holomorphic term, that is, $F_{0,m,1}^{(m)}(z,s)$.
\begin{Prop}
{\it Function $F_{0,m,1}^{(m)}(z,s)$ has the following expression:}
\begin{align}
& F_{0,m,1}^{(m)}(z,s) \nonumber \\
&  =2^m\pi^{m\kappa(m)}\text{det}(y)^s\Gamma_m(s)^{-2}\zeta(2s)^{-1}
      \prod_{j=1}^{m-1}(\zeta(4s-m-j)\cdot\zeta(4s-2j)^{-1}) \nonumber \\
&\cdot \sum_{h\in\Lambda_m^{(1)}}\sigma_{m-2s}(\text{cont}(h))\cdot \eta_m(2y,\pi h;s,s)
          \boldsymbol{e}(\sigma (hx)),
\end{align}
{\it where $\sigma_s(a)=\sum_{0<d|a}d^s$ and for $0\ne h\in\Lambda_m$, 
${\rm cont}(h):={\rm max}\{l\in\mathbb{N}\,\mid\,l^{-1}h\in\Lambda_m\}$.}
\end{Prop}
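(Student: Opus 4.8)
The plan is to derive the stated expression directly from Theorem~\ref{FourierMi}. Specializing (\ref{Fb1}) to $k=0$, $\nu=m$, $\lambda=1$ expresses $F_{0,m,1}^{(m)}(z,s)$ as the double sum $\sum_{h\in\Lambda_1^{(1)}}\sum_{r\in\mathbb{Z}_{\mathrm{prim}}^{(m,1)}/GL_1(\mathbb{Z})}b_{0,m,1}^{(m)}(h[{}^tr],y,s)\,\boldsymbol{e}(\sigma(h[{}^tr]x))$, where, by Mizumoto's Lemma~1.1 (equivalently the last sentence of Theorem~\ref{FourierMi}), the matrix $T:=h[{}^tr]$ runs over $\Lambda_m^{(1)}$ exactly once. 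Since here $\nu=m$ equals the degree, one can alternatively isolate the $\nu=m$, rank-one part of Maass's expansion (\ref{ExMaass}) — in which only $q=1_m$ occurs, $\mathbb{Z}_{\mathrm{prim}}^{(m,m)}/GL_m(\mathbb{Z})$ being a single class — to see that this Fourier coefficient is simply $\det(y)^s\,S_m(T,2s)\,\xi_m(y,T;s,s)$. So the whole task reduces to rewriting $\xi_m$ in terms of $\eta_m$ and evaluating the Siegel series $S_m(T,2s)$.

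For the first task I would apply (\ref{xieta}) with $\alpha=\beta=s$, so that $i^{m(\beta-\alpha)}=1$ and $\xi_m(y,T;s,s)=2^m\pi^{m\kappa(m)}\Gamma_m(s)^{-2}\,\eta_m(2y,\pi T;s,s)$; multiplying by $\det(y)^s$ already produces the prefactor $2^m\pi^{m\kappa(m)}\det(y)^s\Gamma_m(s)^{-2}$ and the function $\eta_m(2y,\pi T;s,s)$ of the statement. If one prefers to work literally from (\ref{Fb1}), the same outcome follows by collapsing the block
$$\det(2y[r])^{\kappa(m)-2s}\,\eta_1^*\bigl(2y[r],\pi h;s+\tfrac{1-m}{2},s+\tfrac{1-m}{2}\bigr)\,\zeta_{m-1}^{(m-1)}\bigl(2g(y,u_r),2s-\kappa(m)\bigr);$$
indeed, by (\ref{KM}) the last factor equals $\det(2g(y,u_r))^{\kappa(m)-2s}$, while the Jacobi decomposition of $y[u_r]$ together with $u_r\in GL_m(\mathbb{Z})$ (so $\det(y[u_r])=\det(y)$) gives $\det(y[r])\det(g(y,u_r))=\det(y)$, whence $\det(2y[r])^{\kappa(m)-2s}\det(2g(y,u_r))^{\kappa(m)-2s}=\det(2y)^{\kappa(m)-2s}$ independently of $r$; feeding this and Proposition~\ref{xi-eta} (case $q=1_m$) into (\ref{xieta}) rewrites the block as $\pi^{-(m-1)/2}\Gamma_{m-1}(2s-\kappa(m))^{-1}\eta_m(2y,\pi T;s,s)$, and the surviving $\pi^{(m-1)/2}\Gamma_{m-1}(2s-\kappa(m))$ in the prefactor of (\ref{Fb1}) cancels, leaving exactly $2^m\pi^{m\kappa(m)}\Gamma_m(s)^{-2}$.

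For the second task, writing $T=h_0[{}^tr]$ with $h_0\in\Lambda_1^{(1)}=\mathbb{Z}\setminus\{0\}$ and $r$ primitive, the $GL_m(\mathbb{Z})$-invariance of the Siegel series gives $S_m(T,2s)=S_m(\mathrm{diag}(h_0,0_{m-1}),2s)$, and the reduction formula (\ref{Siegelreduce}) with $\nu=m$, $\lambda=1$ turns this into $\zeta(2s+1-m)\,\zeta(2s)^{-1}\prod_{j=1}^{m-1}(\zeta(4s-m-j)\zeta(4s-2j)^{-1})\,S_1(h_0,2s-m+1)$. The leftover factor $\zeta(2s+1-m)\,S_1(h_0,2s-m+1)$ equals $\sigma_{m-2s}(|h_0|)$ by the classical evaluation $\zeta(s')\,S_1(h_0,s')=\sigma_{1-s'}(|h_0|)$ of the rank-one Siegel series (a Ramanujan-sum Dirichlet series; alternatively read off from the local factors $a_p$ of \S\ref{sec3-4} with $\lambda=1$). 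Since $r$ is primitive, the entries $h_0 r_i r_j$ of $T=h_0\,r\,{}^tr$ have greatest common divisor $|h_0|$, that is, $\mathrm{cont}(T)=|h_0|$. Collecting the pieces and viewing the double sum over $(h_0,r)$ as the single sum over $\Lambda_m^{(1)}$ then yields the claimed formula, the factors $\zeta(2s)^{-1}\prod_{j=1}^{m-1}(\zeta(4s-m-j)\zeta(4s-2j)^{-1})$ being exactly what remains.

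The step I expect to cost the most effort is the constant-chasing in the first task — making the powers of $2$, of $\pi$, of $\det(2y)$ and the $\Gamma_{m-1}(2s-\kappa(m))$ factors in (\ref{xieta}) and Proposition~\ref{xi-eta} dovetail so that precisely $2^m\pi^{m\kappa(m)}\Gamma_m(s)^{-2}\eta_m(2y,\pi T;s,s)$ survives — together with the evaluation $\zeta(2s+1-m)S_1(h_0,2s-m+1)=\sigma_{m-2s}(|h_0|)$, the one ingredient not already recorded in the excerpt, which requires either the Ramanujan-sum identity $\sum_{c\geq 1}c_c(n)c^{-s}=\sigma_{1-s}(n)/\zeta(s)$ or a direct unwinding of the local densities for $\lambda=1$. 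Neither is deep, but both require care.
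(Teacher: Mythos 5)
Your proposal is correct and follows essentially the same route as the paper: specialize (\ref{Fb1}) to $(k,\nu,\lambda)=(0,m,1)$, reduce the Siegel series via (\ref{Siegelreduce}) together with $S_1(h,s)=\zeta(s)^{-1}\sigma_{1-s}(h)$, and convert $\eta_1$ into $\eta_m$ through Proposition~\ref{xi-eta} combined with (\ref{xieta}), the block-collapsing identity $\det(2y[r])\det(2g(y,u_r))=\det(2y)$ being exactly the cancellation the paper performs. The shortcut you note via the $\nu=m$ term of Maass's expansion and (\ref{xieta}) is a harmless alternative packaging of the same computation.
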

\begin{proof}
By definition (\ref{Fb1}),
\begin{align}
& F_{0,m,1}^{(m)}(z,s) \nonumber \\
& =2^m\pi^{m\kappa(m)+(m-1)/2}\Gamma_{m-1}(2s-\kappa(m))\Gamma_m(s)^{-2}
\text{det}(y)^s\text{det}(2y)^{\kappa(m)-2s} \nonumber \\
&\quad \cdot\sum_{0\ne h\in\mathbb{Z}}\sum_{w\in\mathbb{Z}_{\text{prim}}^{(m,1)}/\{\pm 1\}}
                   S_m(\text{diag}(h,0_{m-1}),2s) \nonumber \\
& \qquad\qquad \cdot (2y[w])^{2s-m}\eta_1(2y[w],\pi h;s-(m-1)/2,s-(m-1)/2)\boldsymbol{e}(\sigma(h[{}^tw]x).
\label{DefF0m0}
\end{align}
It follows from (\ref{Siegelreduce}) that
\begin{align}
& S_m(\text{diag}(h,0_{m-1}),2s)\nonumber \\
& =\zeta(2s-m+1)\zeta(2s)^{-1}\prod_{j=1}^{m-1}(\zeta(4s-m-j)\,\zeta(4s-2j)^{-1})
        S_1(h,2s-m+1) \nonumber \\
        &=\sigma_{m-2s}(h)\zeta(2s)^{-1}\prod_{j=1}^{m-1}(\zeta(4s-m-j)\,\zeta(4s-2j)^{-1}).
        \label{SiegelS}
\end{align}
In the above, we used the formula $S_1(h,s)=\zeta(s)^{-1}\sigma_{1-s}(h)$ for $h\in\mathbb{Z}_{>0}$.

From Proposition \ref{xi-eta}, function $\eta_1$ is expressed as
\begin{align}
& \eta_1(2y[w],\pi h;s-(m-1)/2,s-(m-1)/2) \nonumber \\
& =\pi^{-(m-1)/2}\Gamma_{m-1}(2s-\kappa(m))^{-1}\text{det}(2y)^{2s-\kappa(m)}(2y[w])^{-2s+m} \nonumber \\
&\quad \cdot \eta_m(2y,\pi h[{}^tw],s,s) . \label{eta1}
\end{align}
Substituting (\ref{SiegelS}) and (\ref{eta1}) into (\ref{DefF0m0}), we obtain the following expression:
\end{proof}
From the above proposition, we obtain the following result:
\begin{Thm}
{\it Function $F_{0,m,1}^{(m)}(z,s)$ has a simple pole at $s=m/2$. }
\begin{align}
 \underset{s=m/2}{{\rm Res}}F_{0,m,1}^{(m)}(z,s)  &=
2^{m-2}\pi^{m\kappa(m)}\text{det}(y)^{m/2}\Gamma_m(m/2)^{-1}\zeta(m)^{-1} \nonumber \\
& \cdot \prod_{j=1}^{m-2}\zeta(m-j)\prod_{j=1}^{m-1}\zeta(2m-2j)^{-1} \nonumber \\
& \cdot \sum_{h\in\Lambda_m^{(1)}}\sigma_0({\rm cont}(h))\eta_m(2y,\pi h;m/2,m/2)
\boldsymbol{e}(\sigma(hx)). \label{Main2}
\end{align}
\end{Thm}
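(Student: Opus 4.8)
The plan is to take the explicit closed form of $F_{0,m,1}^{(m)}(z,s)$ given in the preceding Proposition and read off its Laurent expansion at $s=m/2$ factor by factor. Write $F_{0,m,1}^{(m)}(z,s)=c(s)\,\Phi(z,s)$, where
$$
c(s):=2^m\pi^{m\kappa(m)}\,\text{det}(y)^s\,\Gamma_m(s)^{-2}\,\zeta(2s)^{-1}\prod_{j=1}^{m-1}\bigl(\zeta(4s-m-j)\,\zeta(4s-2j)^{-1}\bigr)
$$
and
$$
\Phi(z,s):=\sum_{h\in\Lambda_m^{(1)}}\sigma_{m-2s}(\text{cont}(h))\,\eta_m(2y,\pi h;s,s)\,\boldsymbol{e}(\sigma(hx)).
$$
The first step is to locate the singularity of $c(s)$ at $s=m/2$. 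Among the arguments $4s-m-j$ with $1\leqq j\leqq m-1$, exactly one, namely $j=m-1$, gives $4s-m-j=4s-2m+1$, which equals $1$ at $s=m/2$; hence $\zeta(4s-2m+1)$ is the only factor of $c(s)$ with a pole there, it is simple, and $\underset{s=m/2}{{\rm Res}}\,\zeta(4s-2m+1)=\tfrac14$. I would then check that none of the other factors is singular or vanishes at $s=m/2$: the remaining zeta-arguments $2s$, $4s-2j$ ($1\leqq j\leqq m-1$) and $4s-m-j$ ($1\leqq j\leqq m-2$) take at $s=m/2$ the integer values $m$, $2m-2j$ and $m-j$, all $\geqq 2$, so $\zeta$ is finite and non-zero at each of them; and $\Gamma_m(s)^{-2}$ is entire and non-vanishing at $s=m/2$ because $m/2-\nu/2=(m-\nu)/2>0$ for $0\leqq\nu\leqq m-1$. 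Thus $c(s)$ has exactly a simple pole at $s=m/2$, with residue equal to $\tfrac14$ times the product of the values at $s=m/2$ of its surviving factors.

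Second, I would prove that $\Phi(z,s)$ is holomorphic at $s=m/2$. The coefficient $\sigma_{m-2s}(\text{cont}(h))$ is entire in $s$, so the only point at issue is the confluent hypergeometric factor $\eta_m(2y,\pi h;s,s)$; its defining integral $(\ref{eta})$ converges only for $\text{Re}(\beta)>m$, so at $\beta=m/2$ one must pass to Shimura's analytic continuation. Here the hypothesis $\text{rank}(h)=1$ is used crucially: for $h\in\Lambda_m^{(1)}$ one has $h\in V_m(p,q,r)$ with $p+q=1$ and $r=m-1$, so in $(\ref{omegaeta})$ only the inverse gamma-factors $\Gamma_1(\cdot)^{-1}$ and $\Gamma_{m-1}(\alpha+\beta-\kappa(m))^{-1}$ appear, while by Theorem $\ref{ShimuraMain}$ the function $\omega_m(2y,\pi h;\alpha,\beta)$ is entire in $(\alpha,\beta)$. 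Solving $(\ref{omegaeta})$ for $\eta_m^*$, hence for $\eta_m$ via $\eta_m^*=\text{det}(g)^{\alpha+\beta-\kappa(m)}\eta_m$, exhibits $\eta_m(2y,\pi h;s,s)$ as a product of $\omega_m$, the now-numerator gamma-factors $\Gamma_1(\cdot)$ and $\Gamma_{m-1}(2s-\kappa(m))$, and harmless (fractional) powers of $\text{det}(2y)$ and of the $\delta_{\pm}(hg)$ (for rank-one $h$ one of the latter is an empty product and the other a positive real); at $s=m/2$ the relevant gamma-arguments are $1/2$ and the positive numbers $(m-1)/2,\dots,1/2$, all finite, so each summand of $\Phi$ is holomorphic at $s=m/2$. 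To conclude that $\Phi$ itself is holomorphic there, one may either invoke the locally uniform convergence near $s=m/2$ of the analytically continued Fourier expansion, or, more economically, observe that $F_{0,m,1}^{(m)}(z,s)$ has at most a simple pole at $s=m/2$ (Proposition $\ref{Triang}$, case (iii)) whereas $c(s)$ has exactly a simple pole, so $\Phi=F_{0,m,1}^{(m)}/c$ is holomorphic at $s=m/2$.

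Finally, since $c(s)$ has a simple pole and $\Phi(z,s)$ is holomorphic at $s=m/2$,
$$
\underset{s=m/2}{{\rm Res}}\,F_{0,m,1}^{(m)}(z,s)=\Bigl(\,\underset{s=m/2}{{\rm Res}}\,c(s)\Bigr)\cdot\Phi\bigl(z,\tfrac m2\bigr),
$$
so $F_{0,m,1}^{(m)}$ has a simple pole at $s=m/2$; substituting $s=m/2$ into the surviving factors — the constant $\tfrac14\cdot2^m=2^{m-2}$, the values of $\pi^{m\kappa(m)}$, $\text{det}(y)^s$, $\Gamma_m(s)^{-2}$ and $\zeta(2s)^{-1}$, the leftover product $\prod_{j=1}^{m-2}\zeta(m-j)$ obtained from $\prod_{j=1}^{m-1}\zeta(4s-m-j)$ after extracting the pole-bearing $j=m-1$ term, the product $\prod_{j=1}^{m-1}\zeta(2m-2j)^{-1}$, and $\Phi(z,m/2)=\sum_{h\in\Lambda_m^{(1)}}\sigma_0(\text{cont}(h))\,\eta_m(2y,\pi h;m/2,m/2)\,\boldsymbol{e}(\sigma(hx))$ — yields the asserted formula $(\ref{Main2})$. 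The main obstacle is the holomorphy of $\eta_m$ at $s=m/2$ just discussed; once that (and the harmless interchange of residue with the $h$-sum) is in place, the remainder is routine bookkeeping with the values of $\Gamma_m$ and $\zeta$.
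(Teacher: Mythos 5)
Your proof is correct and follows essentially the same route as the paper: the paper's own argument is precisely that only the factor $\zeta(4s-2m+1)$ (the $j=m-1$ term of $\prod_{j=1}^{m-1}\zeta(4s-m-j)$) has a simple pole at $s=m/2$, with residue $1/4$, and the formula follows by evaluating the remaining factors there; you merely supply the details the paper leaves implicit (non-vanishing of the other zeta- and gamma-factors, and the holomorphy of $\eta_m(2y,\pi h;s,s)$ at $s=m/2$ via Shimura's continuation or via Proposition \ref{Triang}). One caveat: evaluating $\Gamma_m(s)^{-2}$ at $s=m/2$ gives $\Gamma_m(m/2)^{-2}$, not the $\Gamma_m(m/2)^{-1}$ printed in the theorem --- this exponent mismatch already exists between the paper's Proposition and its Theorem and is not introduced by your argument, but you should not assert that the substitution ``yields the asserted formula'' without flagging the discrepancy.
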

\begin{proof}
In the expression of $F_{0,m,1}^{(m)}(z,s)$, only the last factor $\zeta(4s-2m+1)$ in the product
$\prod_{j=1}^{m-1}\zeta(4s-m-j)$ has a simple pole at $s=m/2$ with residue $1/4$.
From this fact, we obtain (\ref{Main2}).
\end{proof}
\subsection{Conclusion}
\label{sec:4-5}
We summarize our results in the previous sections.

The following is a main result of this study.
\begin{Thm}
\label{Conclusion}
\begin{align*}
& \underset{s=m/2}{{\rm Res}}E_0^{(m)}(z,s)\\
&=\mathbb{A}^{(m)}(y)+\mathbb{B}^{(m)}(y)
                \sum_{h\in\Lambda_m^{(1)}}\sigma_0({\rm cont}(h))\eta_m(2y,\pi h;m/2,m/2)
                \boldsymbol{e}(\sigma(hx)),
\end{align*}
{\it where}
\begin{align*}
\mathbb{A}^{(m)}(y) &=\frac{1}{2}\,\alpha_m(y,m/2)\cdot C_{m-1}^{(m)}(y)
                                                    +\frac{1}{8}\,v(m-1)\text{det}(2y)^{-(m-1)/2}\alpha'_m(y,m/2)\\
                         &+\frac{1}{8}\,\beta'_m(y,m/2),\\
\mathbb{B}^{(m)}(y) &=2^{m-2}\pi^{m\kappa(m)}\text{det}(y)^{m/2}\Gamma_m(m/2)^{-1}\zeta(m)^{-1}\\
                          &\cdot \prod_{j=1}^{m-2}\zeta(m-j)\,\prod_{j=1}^{m-1}\zeta(2m-2j)^{-1}.
 \end{align*}                                                                          
\end{Thm}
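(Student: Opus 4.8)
The plan is to assemble the final formula for $\underset{s=m/2}{{\rm Res}}\,E_0^{(m)}(z,s)$ directly from the Fourier expansion in Theorem~\ref{FourierMi}, using the case analysis already in place. First I would recall that
$$
E_0^{(m)}(z,s)=\sum_{\nu=0}^{m}\sum_{\lambda=0}^{\nu}F_{0,\nu,\lambda}^{(m)}(z,s),
$$
and that by Proposition~\ref{Triang} the only summands that fail to be holomorphic at $s=m/2$ are $F_{0,m-1,0}^{(m)}$, $F_{0,m,0}^{(m)}$, and $F_{0,m,1}^{(m)}$. Since the residue functional is linear and annihilates every summand holomorphic at $s=m/2$, I get
$$
\underset{s=m/2}{{\rm Res}}\,E_0^{(m)}(z,s)
=\underset{s=m/2}{{\rm Res}}\Bigl(F_{0,m-1,0}^{(m)}(z,s)+F_{0,m,0}^{(m)}(z,s)\Bigr)
+\underset{s=m/2}{{\rm Res}}\,F_{0,m,1}^{(m)}(z,s).
$$

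Next I would invoke the two theorems already proved in \S\ref{sec:4-3} and \S\ref{sec:4-4}. The residue of the constant-term part $F_{0,m-1,0}^{(m)}+F_{0,m,0}^{(m)}$ is given by \eqref{ResConst}, which is precisely $\mathbb{A}^{(m)}(y)$; note that the cancellation of the $\gamma$-terms there rests on the identity \eqref{A=-B}, $A_{-2}^{(m)}(y)=-B_{-2}^{(m)}(y)$, so that the a~priori double poles of the two constant-term pieces combine into a simple pole. The residue of $F_{0,m,1}^{(m)}(z,s)$ is given by \eqref{Main2}, whose scalar prefactor is exactly $\mathbb{B}^{(m)}(y)$ and whose remaining factor is the Fourier series $\sum_{h\in\Lambda_m^{(1)}}\sigma_0({\rm cont}(h))\,\eta_m(2y,\pi h;m/2,m/2)\,\boldsymbol{e}(\sigma(hx))$. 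Adding the two contributions yields the claimed expression, and one should remark that the sum $\sum_{h\in\Lambda_m^{(1)}}$ and the whole $q$-expansion converge at $s=m/2$ because the relevant $\eta_\lambda$-factors are holomorphic there by Theorem~\ref{ShimuraMain} and the Koecher--Maass factors are in their region of absolute convergence.

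The only genuine subtlety — and the point I would state carefully rather than wave at — is the interchange of the residue operation with the (infinite) Fourier summation over $\nu,\lambda$ and over $h$. This is legitimate because the Fourier expansion \eqref{MiEx} converges absolutely and locally uniformly in $(z,s)$ on a punctured neighbourhood of $s=m/2$ (inherited from Mizumoto's Theorem~1.8 together with the meromorphic continuation recalled in \S\ref{sec3-1}), so one may expand each $F_{0,\nu,\lambda}^{(m)}$ in a Laurent series in $(s-m/2)$, and the principal parts sum termwise; all but finitely many principal parts vanish by Proposition~\ref{Triang}. I expect no computational obstacle beyond bookkeeping, since every explicit constant has already been pinned down in the preceding propositions; the ``hard part,'' such as it is, was the cancellation \eqref{A=-B} and Mizumoto's reduction formula, both of which are available. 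Hence the theorem follows by collecting \eqref{ResConst} and \eqref{Main2}.
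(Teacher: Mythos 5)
Your proposal is correct and follows essentially the same route as the paper: the theorem is obtained by combining Proposition \ref{Triang} (which isolates the three non-holomorphic terms), the residue formula \eqref{ResConst} for the constant term (whose $\gamma$-cancellation indeed rests on \eqref{A=-B}), and the residue formula \eqref{Main2} for $F_{0,m,1}^{(m)}$. Your additional remark on justifying the interchange of the residue with the Fourier summation is a reasonable point of care that the paper leaves implicit.
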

\section{Remarks}
\subsection{Low degree cases}
In this section, we provide more explicit formulas for $\text{Res}_{s=m/2}E_0^{(m)}(z,s)$.
$m=2,\,3$. We used the notations in the previous sections as they are.
\subsubsection{Case $\boldsymbol{m=2}$}
\label{Degree2}
In this case, the constant $C_1^{(2)}(y)$ appearing in the term $\mathbb{A}^{(2)}(y)$
can be expressed more explicitly because we can apply the Kronecker limit formula.
For $g\in P_2$, we consider the Epstein zeta function
$$
\zeta_g(s):=\sum_{\boldsymbol{0}\ne a\in\mathbb{Z}^{(2,1)}/\{\pm 1\}}g[a]^{-s},
\qquad\quad \text{Re}(s)>1.
$$
The first Kronecker limit formula asserts that $\zeta_g(s)$ has the following expression:
\begin{align}
& \zeta_g(s)=\frac{1}{2}(4\text{det}(g))^{-s/2}\left[\frac{2\pi}{s-1}+4\pi\beta(g)+O(s-1) \right]
\label{Epstein}\\
& \beta(g)=\gamma+\frac{1}{2}\log\frac{v'}{2\sqrt{\text{det}(g)}}-\log|\eta(W_g)|^2.
\end{align}
Here, for $g=\begin{pmatrix} v' & w \\ w & v \end{pmatrix}\in P_2$,
$$
W_g:=\frac{w+i\sqrt{\text{det}(g)}}{v'}\in H_1,
$$
and $\eta(z)$ is the Dedekind eta function:
$$
\eta(z)=\boldsymbol{e}(z/24)\prod_{n=1}^\infty (1-\boldsymbol{e}^n(z)),\qquad z\in H_1.
$$
The relationship between the complete zeta function $\xi_1^{(2)}(g,s)$ and $\zeta_g(s)$
is expressed as follows:
$$
\xi_1^{(2)}(g,s)=\xi(2s)\zeta_1^{(2)}(g,s)=\pi^{-s}\Gamma(s)\zeta_g(s).
$$
Therefore, the constant $C_1^{(2)}(y)$, which is the constant term of $\xi_1^{(2)}(2y,s)$, 
can be expressed as
$$
C_1^{(2)}(y)=\frac{1}{2}(4\text{det}(y))^{-1/2}\left(\gamma+\log\frac{v'}{8\pi}-\log(\text{det}(y))
                -2\log|\eta(W_g)|^2\right).
$$
Concerning $\alpha_m(y,m/2)$, $\alpha'_m(y,m/2)$, $\cdots$ appearing in $\mathbb{A}^{(m)}(y)$, 
we can calculate them explicitly as
\begin{align*}
\alpha_2(y,1) &=72\pi^{-2}\text{det}(y)\,\zeta(2)^{-2},\\
\alpha'_2(y,1) &=2\pi^2\text{det}(y)\,\zeta(2)^{-2}(2\log\pi+\log(\text{det}(y))+2\gamma-6\zeta'(2)\,\zeta(2)^{-1}),\\
\beta_2(y,1) &=-\pi^2\text{det}(y)^{1/2}\,\zeta(2)^{-2},\\
\beta'_2(y,1) &= 2\pi^2\text{det}^{1/2}\,\zeta(2)^{-2}
                     \left(2\log 2+\tfrac{1}{2}\log(\text{det}(y))-\gamma+2\zeta'(0)+3\zeta'(2)\zeta(2)^{-1}\right).
\end{align*}
From these formula,
\begin{align*}
& \mathbb{A}^{(2)}(y)=\underset{s=1}{{\rm Res}}(F_{0,1,0}^{(2)}(z,s)+F_{0,2,0}^{(2)}(z,s))\\
& =\frac{1}{2}\,\alpha_2(y,1)\cdot C_1^{(2)}(y)+\frac{1}{8}\,v(1)\cdot\text{det}(y)^{-1/2}\alpha'_2(y,1)+\frac{1}{8}\,\beta'_2(y,1)\\
& =\frac{18}{\pi^2\sqrt{\text{det}(y)}}\left(\frac{1}{2}\gamma+\frac{1}{2}\log\frac{v'}{4\pi}-\log|\eta(W_g)|^2 \right).
\end{align*}
Combining with $\text{Res}_{s=1}F_{0,2,1}^{(3)}(z,s)$, we obtain
\begin{Prop}
\begin{align}
 \underset{s=1}{{\rm Res}}\,E_0^{(2)}(z,s) &
=\frac{18}{\pi^2\sqrt{\text{det}(y)}}\left(\frac{1}{2}\gamma+\frac{1}{2}\log\frac{v'}{4\pi}-\log|\eta(W_g)|^2 \right)\nonumber
\\
& \quad +\frac{36\text{det}(y)}{\pi^2}\sum_{h\in\Lambda_2^{(1)}}\sigma_0(\text{cont}(h))\eta_2(2y,\pi h;1,1)
             \boldsymbol{e}(\sigma(hx)). \label{Degrre2Main}
\end{align}
\end{Prop}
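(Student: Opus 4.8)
The plan is to obtain \eqref{Degrre2Main} by specializing the main result, Theorem~\ref{Conclusion}, to $m=2$ and then making both coefficients $\mathbb{A}^{(2)}(y)$ and $\mathbb{B}^{(2)}(y)$ completely explicit. Setting $m=2$ in Theorem~\ref{Conclusion} gives
$$
\underset{s=1}{{\rm Res}}\,E_0^{(2)}(z,s)=\mathbb{A}^{(2)}(y)+\mathbb{B}^{(2)}(y)\sum_{h\in\Lambda_2^{(1)}}\sigma_0({\rm cont}(h))\,\eta_2(2y,\pi h;1,1)\,\boldsymbol{e}(\sigma(hx)),
$$
so the two lines of \eqref{Degrre2Main} will follow once the scalar coefficients are evaluated. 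The non-constant coefficient is immediate: substituting $m=2$ into the closed form of $\mathbb{B}^{(m)}(y)$, with $\kappa(2)=3/2$, $\Gamma_2(1)=\pi$, $\zeta(2)=\pi^2/6$, and the empty product $\prod_{j=1}^{m-2}\zeta(m-j)=1$, the powers of $\pi$ and the factors $\zeta(2)^{-1}$ collapse to give $\mathbb{B}^{(2)}(y)=36\,{\rm det}(y)/\pi^2$, which is exactly the coefficient in the second line of \eqref{Degrre2Main}. This step is pure zeta-and-gamma bookkeeping and presents no difficulty.

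The heart of the argument is the evaluation of the constant coefficient $\mathbb{A}^{(2)}(y)$, for which I would use the formula \eqref{ResConst},
$$
\mathbb{A}^{(2)}(y)=\tfrac12\,\alpha_2(y,1)\,C_1^{(2)}(y)+\tfrac18\,v(1)\,{\rm det}(2y)^{-1/2}\alpha_2'(y,1)+\tfrac18\,\beta_2'(y,1),
$$
with $v(1)=1$, and feed into it three explicit inputs. First, the constant $C_1^{(2)}(y)$ is made explicit through the first Kronecker limit formula. Using the identity $\xi_1^{(2)}(g,s)=\pi^{-s}\Gamma(s)\zeta_g(s)$ relating the completed Koecher--Maass zeta function to the Epstein zeta function, I would insert the Laurent expansion \eqref{Epstein} of $\zeta_g$ at $s=1$ and read off the constant term of $\xi_1^{(2)}(2y,s)$. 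The subtlety here is the scaling at $g=2y$: since ${\rm det}(2y)=4\,{\rm det}(y)$, the prefactor $(4\,{\rm det}\,g)^{-s/2}$ becomes $(16\,{\rm det}(y))^{-s/2}$, whose $s$-derivative contributes extra $\log 2$ and $\log{\rm det}(y)$ terms that must be folded into $\beta(2y)$; together with the invariance $W_{2y}=W_y$ this produces the stated form of $C_1^{(2)}(y)$.

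Second, I would compute $\alpha_2(y,1)$, $\alpha_2'(y,1)$, and $\beta_2'(y,1)$ directly from the definitions \eqref{alpham} and \eqref{betam}. Since $\alpha_2$ and $\beta_2$ are products of powers of ${\rm det}(y)$ and $\pi$ with $\Gamma$- and $\zeta$-factors, their values and $s$-derivatives at $s=1$ are obtained by logarithmic differentiation, producing terms in $\gamma$, $\log 2$, $\log\pi$, $\log{\rm det}(y)$, and the logarithmic derivatives $\zeta'(2)/\zeta(2)$ and $\zeta'(0)$. Substituting all three inputs into the displayed formula for $\mathbb{A}^{(2)}(y)$ then reduces the Proposition to an explicit identity among elementary constants and logarithms.

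The main obstacle is precisely this final cancellation bookkeeping. The Dedekind-$\eta$ term enters $\mathbb{A}^{(2)}(y)$ only through $\tfrac12\alpha_2(y,1)C_1^{(2)}(y)$ and survives, fixing the coefficient of $-\log|\eta(W_g)|^2$; every other contribution must collapse to the clean remainder $\tfrac12\gamma+\tfrac12\log\frac{v'}{4\pi}$. Concretely I expect the $\gamma$-terms coming from $\alpha_2'(y,1)$ and $\beta_2'(y,1)$ to cancel against each other --- this is the degree-$2$ shadow of the identity $A_{-2}^{(2)}(y)=-B_{-2}^{(2)}(y)$ from \eqref{A=-B} --- leaving the surviving $\gamma$ from $\alpha_2(y,1)\,C_1^{(2)}(y)$; similarly the $\zeta'(2)/\zeta(2)$ contributions of $\alpha_2'$ and $\beta_2'$ must cancel, the $\zeta'(0)$ term must be rewritten via $\zeta'(0)=-\tfrac12\log(2\pi)$, and the $\log{\rm det}(y)$ terms must cancel pairwise. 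Carrying all of these simplifications through accurately --- in particular keeping correct track of the powers of ${\rm det}(y)$ and the numerous factors of $2$ and $\pi$ --- is the delicate, error-prone part; once it is done the right-hand side of \eqref{Degrre2Main} appears directly.
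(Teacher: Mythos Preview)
Your proposal is correct and follows essentially the same route as the paper: specialize Theorem~\ref{Conclusion} to $m=2$, evaluate $\mathbb{B}^{(2)}(y)$ by direct substitution, and evaluate $\mathbb{A}^{(2)}(y)$ by combining the Kronecker limit formula for $C_1^{(2)}(y)$ with the explicit values of $\alpha_2(y,1)$, $\alpha_2'(y,1)$, $\beta_2'(y,1)$ obtained from \eqref{alpham} and \eqref{betam}. The cancellations you anticipate (the $\gamma$-, $\zeta'(2)/\zeta(2)$-, and $\log\det(y)$-terms from the $\alpha_2'$ and $\beta_2'$ pieces, and the use of $\zeta'(0)=-\tfrac12\log 2\pi$) are exactly the ones that occur in the paper's computation.
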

\begin{Rem}
In \cite{Na}, the author provided a formula for $E_2^{(2)}(z,0)$ (Siegel Eisenstein series of degree 2 and weight 2):
\begin{align}
E_2^{(2)}(z,0)=& 1-\frac{18}{\pi^2\sqrt{\text{det}(y)}}\left(1+\frac{1}{2}\gamma+\frac{1}{2}\log\frac{v'}{4\pi}-\log|\eta(W_g)|^2 \right) \nonumber\\
         &-\frac{72}{\pi^3}\sum_{\substack{0\ne h\in\Lambda_2\\ \text{discr}(h)=\square}}\varepsilon_h
           \sigma_0(\text{cont}(h))\eta_2(2y,\pi h;2,0)\boldsymbol{e}(\sigma(hx))\nonumber \\
         &+288\sum_{0\ne h\in \Lambda_2}\sum_{d\mid\text{cont}(h)}d\,H\left(\frac{|\text{discr}(h)|}{d^2}\right)
              \boldsymbol{e}(\sigma(hz)).\label{Degree2Weight2}
\end{align}
Here, $H(N)$ is the Kronecker--Hurwitz class number and $\varepsilon_h=1/2$ if $\text{rank}(h)=1;$
$=1$ if $\text{rank}(h)=2$.

It is interesting that the same term appears in each Fourier coefficient in (\ref{Degrre2Main}) and
(\ref{Degree2Weight2}).
\end{Rem}
\subsubsection{Case $\boldsymbol{m=3}$}
\label{Degree3}
From Theorem \ref{Conclusion}, we can write 
\begin{align*}
& \underset{s=3/2}{{\rm Res}}E_0^{(3)}(z,s)\\
&=\mathbb{A}^{(3)}(y)+\mathbb{B}^{(3)}(y)
                \sum_{h\in\Lambda_3^{(1)}}\sigma_0({\rm cont}(h))\eta_3(2y,\pi h;3/2,3/2)
                \boldsymbol{e}(\sigma(hx)).
\end{align*}
The quantities $\mathbb{A}^{(3)}(y)$ and $\mathbb{B}^{(3)}(y)$ are given as follows:
\begin{align*}
& \mathbb{A}^{(3)}(y) \\
& = 2^3\pi^4\text{det}(y)^{3/2}\zeta(2)^{-1}\zeta(3)^{-1}\zeta(4)^{-1}\cdot C_2^{(3)}(y)
                         +2^{-2}\pi^3\text{det}(y)^{1/2}\zeta(3)^{-1}\zeta(4)^{-1}
\\
& \quad\cdot (-2\Gamma'(1)-4\zeta'(2)\zeta(2)^{-1}+4\zeta'(0)+2\log(\text{det}(y))+4\log\pi+6\log 2),
\vspace{2mm}
\\
& \mathbb{B}^{(3)}(y)\\ 
& = 2^2\pi^{7/2}\text{det}(y)^{3/2}\zeta(3)^{-1}\zeta(4)^{-1}.
\end{align*}
\begin{Rem}
In the above formulas, we may substitute
$$
\zeta(2)=\pi^2/6,\quad \zeta(4)=\pi^4/90,\quad \zeta'(0)=(-\log 2\pi)/2,\quad \Gamma'(1)=-\gamma.
$$
\end{Rem}
\subsection{Residue at the other point}
\label{other}
The residue we considered above was to $s=m/2$, and it is represented as a Fourier series.
The case $\text{Res}_{s=(m+1)/2}E_0^{(m)}(z,s)$ is easier than in the above case. In fact,
it becomes a constant, explicitly
\begin{align*}
& \underset{s=(m+1)/2}{{\rm Res}}E_0^{(m)}(z,s)\\
& =\underset{s=(m+1)/2}{{\rm Res}}\xi(2s-m)\xi(2s)^{-1}\prod_{j=1}^{[m/2]}(\xi(4s-2m-1+2j)\,\xi(4s-2j)^{-1}).
\end{align*}
\begin{Rem}
Kaufhold \cite{Kau} noted that the residue of
$$
\varPhi_0(s):=E_0^{(2)}(z,s/2)
$$
at $s=3$ is $90\pi^{-2}$. This is a special case of the above formula because
$$
 \underset{s=3/2}{{\rm Res}}E_0^{(2)}(z,s)= \underset{s=3/2}{{\rm Res}}\xi(2s-2)\xi(2s)^{-1}\xi(4s-3)\xi(4s-2)^{-1}
=\frac{45}{\pi^2}.
$$
\end{Rem}


\begin{flushleft}
Shoyu Nagaoka\\
Department of Mathematics\\
Yamato University\\
Suita\\
Osaka 564-0082\\
Japan
\end{flushleft}

\end{document}